\documentclass[preprint,12pt,nopreprintline]{elsarticle}

\usepackage[T1]{fontenc}
\usepackage{lmodern}
\usepackage{amsmath,amssymb,amsthm,mathtools}
\usepackage{mathrsfs}
\usepackage{enumitem}
\usepackage{microtype}
\usepackage[colorlinks=true,
            linkcolor=blue,
            citecolor=blue,
            urlcolor=blue]{hyperref}
\biboptions{numbers,sort&compress}
\journal{Journal of Functional Analysis}
\makeatletter
\let\ps@pprintTitle\ps@plain
\makeatother

\newtheorem{theorem}{Theorem}[section]
\newtheorem{proposition}[theorem]{Proposition}
\newtheorem{lemma}[theorem]{Lemma}
\newtheorem{corollary}[theorem]{Corollary}
\theoremstyle{definition}
\newtheorem{definition}[theorem]{Definition}
\theoremstyle{remark}
\newtheorem{remark}[theorem]{Remark}
\numberwithin{equation}{section}

\begin{document}
\begin{frontmatter}

\title{Transfinite contractive propagation and the ball fixed point property in \texorpdfstring{$C(K)$}{C(K)}}
\author{Cleon S. Barroso}
 \address{Department of Mathematics\\ Federal University of Ceará, Fortaleza, 60455-360, CE, Brazil}
\ead{cleonbar@mat.ufc.br}

\begin{abstract}
We prove that, for a compact Hausdorff space $K$, the real Banach space $C(K)$ has the ball fixed point property if and only if $K$ is extremally disconnected. The new implication is obtained by constructing a fixed-point-free nonexpansive self-map of the whole closed unit ball whenever $K$ is a compact $F$-space which is not extremally disconnected. The construction uses a transfinite contractive propagation system with two increasing profiles. A successor--limit delay preserves their tail constraint and has no subsolution in the profile domain. Analysis records boundary deficits; positive operators on ordinal $C_0$-spaces and a common center determined by the two tails yield a nonexpansive synthesis satisfying an order domination inequality. The required topological families are obtained from a gap in a maximal Boolean chain in the zero-dimensional case, and from a transfinite extension of signs otherwise. The argument works in ZFC and resolves the question posed by Avil\'es, Jap\'on, Lennard, Mart\'inez-Cervantes, and Stawski.
\end{abstract}

\begin{keyword}
Nonexpansive mapping \sep Ball fixed point property \sep Space of continuous functions \sep $F$-space \sep
Extremal disconnectedness \sep Transfinite recursion
\MSC[2020] 47H10 \sep 46E15 \sep 54G05
\end{keyword}

\end{frontmatter}

\section{Introduction}
\label{sec:int}

The fixed point theory of nonexpansive mappings on \(C(K)\)-spaces exhibits a particularly subtle interaction between the topology of the compact space \(K\) and the metric geometry of \(C(K)\). Recall that a mapping $T:C\to C$ on a subset \(C\) of a Banach space is \emph{nonexpansive} if
\[
\|Tx-Ty\|\leq \|x-y\|,\qquad x,y\in C.
\]

The ball fixed point problem for spaces of continuous functions asks which compact spaces $K$ have the property that every nonexpansive self-map of the closed unit ball of $C(K)$ has a fixed point. We prove that these are exactly the extremally disconnected compact spaces. The proof constructs a nonexpansive map on the whole ball from transfinite profiles that record incompatible boundary values. Throughout the paper, $K$ is compact Hausdorff and $C(K)$ is the real Banach space of continuous functions on $K$, with $\|f\|_\infty=\sup_{t\in K}|f(t)|$. We write $B_X$ for the closed unit ball of a Banach space $X$. A map between metric spaces is \emph{nonexpansive} if its Lipschitz constant is at most $1$. We work in ZFC and identify cardinals with their initial ordinals.

\subsection{The ball fixed point problem and the main result}

\begin{definition}
A Banach space $X$ has the \emph{ball fixed point property} (BFPP) if every nonexpansive map $T:B_X\to B_X$ has a fixed point.
Equivalently, the same assertion holds on every closed ball of $X$, by translation and dilation; a ball of radius zero is immediate.
\end{definition}

Recall that $K$ is \emph{extremally disconnected} if the closure of every open subset is open. A set of the form
$\operatorname{coz}(f)=\{t:f(t)\ne0\}$, for $f\in C(K)$, is a \emph{cozero set}. We use the characterization of a compact
$F$-space by the property that disjoint cozero sets have disjoint closures; see \cite[Lemma~3.1]{AJLCS}. Avil\'es, Jap\'on, Lennard, Mart\'inez-Cervantes, and Stawski proved
\begin{equation}
 K\text{ extremally disconnected}
 \ \Longrightarrow\ C(K)\text{ has BFPP}
 \ \Longrightarrow\ K\text{ is an }F\text{-space}.
 \label{eq:known-implications}
\end{equation}
See \cite[Corollary~2.4 and Theorem~3.2]{AJLCS}. Their work provides examples of compact $F$-spaces which fail the
BFPP and asks whether the first implication is an equivalence. The unresolved class in that work consists of compact $F$-spaces
which are not extremally disconnected. We settle this case.

\begin{theorem}[Main result]
\label{thm:intro-main}
For every compact Hausdorff space $K$, the real space $C(K)$ has the BFPP if and only if $K$ is extremally disconnected.
\end{theorem}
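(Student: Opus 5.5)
By the implications \eqref{eq:known-implications} from \cite{AJLCS}, it suffices to show the following: if $K$ is a compact $F$-space that is not extremally disconnected, then $B_{C(K)}$ carries a fixed-point-free nonexpansive self-map. Everything else is already known. If $K$ is extremally disconnected, $C(K)$ has the BFPP. If $K$ is not an $F$-space, the BFPP fails by \cite[Theorem~3.2]{AJLCS}.

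\textbf{Topological input.} Since $K$ is not extremally disconnected, there is an open set $U$ whose closure is not open. I would extract from this a transfinite family of disjoint-ish cozero sets with the following behaviour.
\begin{itemize}
\item[(i)] Fix a regular cardinal $\kappa$ and two increasing chains $(U_\alpha)_{\alpha<\kappa}$, $(V_\alpha)_{\alpha<\kappa}$ of cozero sets with $\overline{U_\alpha}\cap\overline{V_\beta}=\emptyset$ for all $\alpha,\beta$. The $F$-space property guarantees this disjointness at every stage.
\item[(ii)] The unions $\bigcup U_\alpha$ and $\bigcup V_\alpha$ have closures meeting at some point $t_0$.
\end{itemize}

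The chains come from different sources in two cases.
\begin{itemize}
\item[(a)] \emph{Zero-dimensional $K$.} Take a maximal chain in the Boolean algebra of clopen sets lying under $U$. Its failure to have a supremum yields a gap, and the two sides of the gap give clopen chains.
\item[(b)] \emph{General $K$.} Run a transfinite recursion that extends sign functions $\pm1$ on increasing cozero sets. At limit stages use the $F$-space property: disjoint cozero sets have disjoint closures, so the signs extend continuously. The recursion must halt before covering $\overline U$, since otherwise $\overline U$ would be open.
\end{itemize}

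\textbf{Analytic construction.} Let $\phi_\alpha,\psi_\alpha\in C(K)$ with $0\le\phi_\alpha,\psi_\alpha\le1$ be Urysohn-type profiles supported in $U_{\alpha+1}$ and $V_{\alpha+1}$, equal to $1$ on $U_\alpha$ and $V_\alpha$ respectively, and increasing in $\alpha$. For $f\in B_{C(K)}$, I would define $Tf$ by a \emph{shift along the ordinal}, replacing stage $\alpha$ by stage $\alpha+1$. The model is the classical fixed-point-free shift on $c_0$, where one sets $T(x)=(1,x_1,x_2,\dots)$, truncated so that values stay in $[-1,1]$. Concretely, the map is assembled from three pieces:
\begin{itemize}
\item the deficits $d_\alpha(f)=\max(0,1-\ldots)$ of $f$ along the $U$-profile and the $V$-profile, each of which is $1$-Lipschitz in $f$;
\item positive norm-one operators on $C_0(\kappa)$ that delay by one successor step and take suprema at limits;
\item a common center $c$, fixed by the two tails, which prescribes $+1$ near the $U$-side and $-1$ near the $V$-side.
\end{itemize}
Nonexpansiveness should follow from an order-domination inequality $|Tf-Tg|\le S|f-g|$ with $S$ a positive contraction, combined with lattice truncation (McShane-type $\min/\max$ operations are $1$-Lipschitz in sup norm).

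\textbf{Main obstacle: excluding fixed points.} A fixed point $f$ would give a subsolution of the delay inequality $p(\alpha+1)\ge p(\alpha)$, $p(\lambda)=\sup_{\alpha<\lambda}p(\alpha)$, with the tail constraint forcing the two profiles toward $+1$ and $-1$ at $t_0$. Continuity of $f$ at $t_0\in\overline{\bigcup U_\alpha}\cap\overline{\bigcup V_\alpha}$ then produces a contradiction, because $f$ would have to equal both $+1$ and $-1$ there. The delicate point is arranging the limit-stage behaviour: suprema must be preserved so that the map stays continuous and nonexpansive, while no invariant profile survives. I would first try to prove that the delay operator has no nonzero fixed point in the profile domain, by transfinite induction on $\alpha$ using the regularity of $\kappa$.
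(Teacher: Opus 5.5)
Your reduction to compact $F$-spaces that are not extremally disconnected matches the paper, and your final contradiction ($f=1$ on one side, $f=-1$ on the other, clash at $t_0$) is the right endpoint. However, the analytic part lists ingredients without ever defining $T$. It also skips the one difficulty a definition must overcome: the output has to be a \emph{continuous} function on $K$. Since $t\mapsto\sup_\alpha u_\alpha(t)$ is only lower semicontinuous, a pointwise ``shift along the ordinal'' built from the profiles does not in general land in $C(K)$.

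The paper's mechanism is specific. The analysis map records the deficits $a_\alpha(f)=\sup_{F_\alpha}(1-f)$ and $b_\beta(f)=\sup_{G_\beta}(1+f)$. Continuity of $f$ at a common closure point gives the tail constraint $A+B\ge2$. The profiles are regularized by $R_\tau$ to become continuous on $[0,\tau)$ and are shifted by the \emph{scalar} center $c=(B-A)/2$. This center lies in $[1-A,B-1]$ precisely because $A+B\ge2$, so $(1-R_\rho a-c)^+$ and $(1-R_\sigma b+c)^+$ vanish at infinity. Only then are the positive contractions $S_\pm:C_0\to C(K)$ with $S\mathbf 1_{[0,\alpha]}=u_\alpha$ applied.

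So the positive operators on ordinal $C_0$-spaces are the synthesis, not the delay; the delay acts on increasing profiles in $\ell_\infty$. Nonexpansiveness comes from the convex integral representation \eqref{tcp:eq:convex-integral}, not from an estimate $|Tf-Tg|\le S|f-g|$, which does not even type-check because $S$ acts on functions of the ordinal.

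Your exclusion of fixed points is also too weak. A fixed point of $\mathcal J\mathscr P\mathcal A$ produces $z=\mathcal A\mathcal J\mathscr Pz$, not a fixed point of the delay. Two things are needed:
\begin{itemize}
\item the order estimate $\mathcal A\mathcal Jz\le z$, which holds because on $F_\alpha$ the representing measure is a probability on $[0,\alpha]$, forcing $\mathcal Jz\ge1-a_\alpha$ there;
\item the absence of nonzero \emph{subsolutions} $a\le Q_\tau a$, not merely of nonzero fixed points.
\end{itemize}
The inequality you wrote, $p(\alpha+1)\ge p(\alpha)$, is just monotonicity and points the wrong way. Regularity of $\kappa$ plays no role in that lemma.

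The topological input also has incorrect steps. In (b), the $F$-space property does not carry the recursion through every limit. Only countable unions of cozero sets are cozero, so it handles limits of countable cofinality. At a limit of uncountable cofinality the closures may meet, and that failure is exactly the stopping stage that produces the inseparable pair. Stopping must also be forced; ``otherwise $\overline U$ would be open'' is not an argument. The paper takes $p,q$ not separated by any clopen, which exist since $K$ is not zero-dimensional. It notes that each $h_\alpha$ then takes a value in $(-1,0)\cup(0,1)$, so all stages are distinct, and concludes by cardinality that the recursion stops before $|C(K)|^+$.

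In (a), a maximal chain of clopens contained in $U$ need not lack a supremum: if its union is dense in a clopen $H\not\subseteq U$, then $H$ is its supremum. You need a chain with no supremum in all of $\operatorname{Clop}(K)$, namely the partial joins of a family of least cardinality without a supremum. Extending it to a maximal chain of $\operatorname{Clop}(K)$, the resulting cut is a gap.

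The two sides of such a gap can have different cofinalities, so demanding a single $\kappa$ in (i) is unjustified. This is why the paper keeps two profiles with two tails and encodes them in one ordinal only at the end. Your Urysohn profiles also presuppose $\overline{U_\alpha}\subseteq U_{\alpha+1}$, which (i) does not provide.
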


The constructive part is Theorem~\ref{tcp:thm:main}. For each compact $F$-space $K$ which is not extremally disconnected, it gives an uncountable regular cardinal $\kappa$, a nonempty closed bounded convex set $\mathcal D\subset\ell_\infty(\kappa)$, and nonexpansive maps
\[
 \mathcal A:B_{C(K)}\to\mathcal D,\qquad
 \mathscr P:\mathcal D\to\mathcal D,\qquad
 \mathcal J:\mathcal D\to B_{C(K)}
\]
such that $\mathscr P$ is a transfinite propagator and $\operatorname{Fix}(\mathcal A\mathcal J\mathscr P)=\varnothing$.
Consequently $T=\mathcal J\mathscr P\mathcal A$ is a fixed-point-free nonexpansive self-map of $B_{C(K)}$.
Together with \eqref{eq:known-implications}, this proves Theorem~\ref{thm:intro-main}; the final deduction is recorded in
Corollary~\ref{tcp:cor:characterization}.

\subsection{From discrete propagation to transfinite realization}

In \cite{Barroso2026}, contractive propagation systems were introduced as an abstract framework for constructing fixed-point-free nonexpansive maps on bounded, closed, and convex subsets of Banach spaces. Crucially, these discrete systems decouple the coefficient dynamics, the propagation mechanism, and the comparison argument that rules out fixed points. A typical recurrence takes the form
\begin{equation}
 \mathcal S_{n+1}(x)
 =a_n\mathcal S_n(x)+b_ng_n(x)+c_n.
 \label{tcp:eq:discrete}
\end{equation}
If the coefficients and the translation term are independent of $x$, then
\[
 \operatorname{Lip}(\mathcal S_{n+1})
 \le |a_n|\operatorname{Lip}(\mathcal S_n)
      +|b_n|\operatorname{Lip}(g_n).
\]
Thus nonexpansive constituent maps and $|a_n|+|b_n|\le1$ give a nonexpansive successor update. Here and below, \emph{contractive}
means Lipschitz constant at most $1$, not strictly less than $1$.

\smallskip 

Successor--limit shifts with upper and lower envelope rules already occur in \cite[proof of Theorem~4.1]{AJLCS}. The contribution here is a realization of transfinite propagation on the unit ball of $C(K)$ for every compact $F$-space which is not extremally disconnected. Two monotone profiles carry the boundary information. Their delay $P_0$ preserves an invariant domain $\mathcal D_0$ and has no subsolution $z\le P_0z$ in that domain. The main analytic step constructs nonexpansive maps $\mathcal A_0,\mathcal J_0$ with
\[
 \mathcal A_0\mathcal J_0z\le z\qquad(z\in\mathcal D_0).
\]
A fixed point of $\mathcal A_0\mathcal J_0P_0$ would therefore be such a subsolution. Continuity of $\mathcal J_0$ is obtained from positive operators on ordinal $C_0$-spaces; a convex integral representation gives the Lipschitz constant $1$.

\smallskip 

The paper is organized as follows. Section~\ref{sec:tcp} defines the propagation framework and proves
the transfer and comparison principles. Section~\ref{sec:realization} constructs the two-profile realization under explicit topological hypotheses. Section~\ref{sec:families} verifies these hypotheses in the zero-dimensional and non-zero-dimensional cases. Section~\ref{sec:conclusion} encodes the two profiles in one ordinal and completes the BFPP characterization.

\section{Transfinite contractive propagation systems}
\label{sec:tcp}

The metric and order arguments in this section are independent of the later topological constructions. Their role is to transfer the absence of profile subsolutions to the absence of fixed points in the ball.

\subsection{Ordinal envelopes and transfinite propagators}
\label{tcp:subsec:envelopes}

For an ordinal~$\kappa$, let
\[
 \ell_\infty(\kappa)=\left\{x=(x_\alpha)_{\alpha<\kappa}:\sup_{\alpha<\kappa}|x_\alpha|<\infty\right\},\qquad \|x\|_\infty=\sup_{\alpha<\kappa}|x_\alpha|.
\]
All inequalities between profiles are understood coordinatewise. We begin with the two basic envelope operators.

\begin{definition}[Ordinal envelopes]
\label{tcp:def:envelopes}
For a nonzero limit ordinal~$\lambda$ and $a\in\ell_\infty(\lambda)$, define
\[
 \overline{\mathcal E}_\lambda(a)=\limsup_{\alpha\uparrow\lambda}a_\alpha:=\inf_{\beta<\lambda}\sup_{\beta\leq\alpha<\lambda}a_\alpha
\]
and
\[
 \underline{\mathcal E}_\lambda(a)=\liminf_{\alpha\uparrow\lambda}a_\alpha:=\sup_{\beta<\lambda}\inf_{\beta\leq\alpha<\lambda}a_\alpha.
\]
\end{definition}

These operators have exactly the metric property required by a contractive propagation theory.

\begin{lemma}[Envelope contraction]
\label{tcp:lem:envelope}
For every nonzero limit ordinal~$\lambda$, both maps 
\[
\overline{\mathcal E}_\lambda,\,\underline{\mathcal E}_\lambda:\ell_\infty(\lambda)\longrightarrow\mathbb R 
\]
are nonexpansive.
\end{lemma}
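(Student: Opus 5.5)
The plan is to show directly that $|\overline{\mathcal E}_\lambda(a)-\overline{\mathcal E}_\lambda(b)|\le\|a-b\|_\infty$ for all $a,b\in\ell_\infty(\lambda)$. This will follow from two elementary properties of $\sup$ and $\inf$ over index sets, monotonicity and translation equivariance. The liminf case will then be obtained by the identity $\underline{\mathcal E}_\lambda(a)=-\overline{\mathcal E}_\lambda(-a)$, which holds because $\sup_{\beta<\lambda}\inf_{\beta\le\alpha<\lambda}a_\alpha=-\inf_{\beta<\lambda}\sup_{\beta\le\alpha<\lambda}(-a_\alpha)$. Since $a\mapsto -a$ is an isometry of $\ell_\infty(\lambda)$, nonexpansiveness transfers.

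First I check that $\overline{\mathcal E}_\lambda$ is well defined and finite on $\ell_\infty(\lambda)$. Because $\lambda$ is a nonzero limit ordinal, each tail $\{\alpha:\beta\le\alpha<\lambda\}$ with $\beta<\lambda$ is nonempty. Hence every $s_\beta(a):=\sup_{\beta\le\alpha<\lambda}a_\alpha$ lies in $[-\|a\|_\infty,\|a\|_\infty]$, and so does their infimum over $\beta<\lambda$.

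The key step is the following. Fix $a,b$ and put $d=\|a-b\|_\infty$. Then $a_\alpha\le b_\alpha+d$ for every $\alpha$, so $s_\beta(a)\le s_\beta(b)+d$ for every $\beta<\lambda$. Taking the infimum over $\beta$ gives $\overline{\mathcal E}_\lambda(a)\le\overline{\mathcal E}_\lambda(b)+d$, since the infimum is monotone and commutes with adding a constant. Exchanging $a$ and $b$ gives the reverse inequality, and therefore $|\overline{\mathcal E}_\lambda(a)-\overline{\mathcal E}_\lambda(b)|\le d$.

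There is no real obstacle here. The only point needing care is the nonemptiness of the tails, which guarantees that the suprema are finite real numbers rather than $-\infty$. This is exactly where the hypothesis that $\lambda$ is a nonzero limit ordinal is used.
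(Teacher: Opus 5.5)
Your proof is correct and is essentially the paper's argument: the tailwise bound $\sup_{\beta\le\alpha<\lambda}a_\alpha\le\sup_{\beta\le\alpha<\lambda}b_\alpha+d$, then the infimum over $\beta$ and symmetry in $a,b$, and finally the reflection identity $\underline{\mathcal E}_\lambda(a)=-\overline{\mathcal E}_\lambda(-a)$. One small correction to your last paragraph: each tail $[\beta,\lambda)$ is nonempty simply because it contains $\beta$, and $\lambda\ne0$ makes the outer infimum range over a nonempty set, so finiteness does not actually depend on $\lambda$ being a limit ordinal.
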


\begin{proof}
Let $d=\|x-y\|_\infty$. For every $\beta<\lambda$,
\[
 \sup_{\beta\le\alpha<\lambda}x_\alpha\le \sup_{\beta\le\alpha<\lambda}y_\alpha+d.
\]
Taking the infimum over~$\beta$ and then interchanging $x,y$ proves
\[
|\overline{\mathcal E}_\lambda(x)-\overline{\mathcal E}_\lambda(y)|\le d.
\]
The identity $\underline{\mathcal E}_\lambda(x) =-\overline{\mathcal E}_\lambda(-x)$ gives the other estimate.
\end{proof}

\begin{definition}[Transfinite contractive propagator]
\label{tcp:def:propagator}
Let $\kappa>0$ be an ordinal and let $\mathcal D\subset\ell_\infty(\kappa)$ be nonempty. A \emph{transfinite contractive propagator}, abbreviated TCP, is a self-mapping
\[
\mathscr P:\mathcal D\longrightarrow\mathcal D 
\]
whose zeroth coordinate is a fixed constant, whose successor coordinates are given by nonexpansive rules
\[
 (\mathscr Px)_{\alpha+1}=\Phi_\alpha\bigl(x|_{[0,\alpha]}\bigr),\qquad|\Phi_\alpha(u)-\Phi_\alpha(v)|\le\|u-v\|_\infty,
\]
and whose nonzero limit coordinates satisfy
\[
 (\mathscr Px)_\lambda
 =\mathcal E^{\varepsilon_\lambda}_\lambda
      (x|_{[0,\lambda)}),
 \qquad \varepsilon_\lambda\in\{-,+\},
\]
where $\mathcal E^-_\lambda=\underline{\mathcal E}_\lambda$ and $\mathcal E^+_\lambda=\overline{\mathcal E}_\lambda$.
The successor rules and the choice of $\varepsilon_\lambda$ are fixed independently of the input. At a successor of a limit, a
successor rule may store the other envelope of the preceding prefix.
\end{definition}

\begin{remark}
\label{tcp:rem:propagator-metric}
The successor rules need only be defined on the prefix sets arising from~$\mathcal D$. Each output coordinate is a nonexpansive function of the input profile, so Lemma~\ref{tcp:lem:envelope} implies
\[
 \|\mathscr Px-\mathscr Py\|_\infty\le\|x-y\|_\infty.
\]
The requirement $\mathscr P(\mathcal D)\subseteq\mathcal D$ is a separate invariance condition. The recursion prescribes the coordinates of one image profile from prefixes of its input; it does not assert convergence of the iterates of~$\mathscr P$.
\end{remark}

\subsection{Nonexpansive realization and fixed point transfer}
\label{tcp:subsec:transfer}

\begin{definition}[Contractive profile realization]
\label{tcp:def:realization}
Let $X$ be a Banach space and let $\mathscr P:\mathcal D\to\mathcal D$ be a TCP. A \emph{contractive profile realization} in~$B_X$ consists of nonexpansive maps
\[
 \mathcal A:B_X\longrightarrow\mathcal D,\qquad \mathcal J:\mathcal D\longrightarrow B_X.
\]
They will be called the analysis and synthesis maps, respectively. The data $(\mathcal D,\mathscr P,\mathcal A,\mathcal J)$ form a \emph{transfinite contractive propagation system} on $B_X$. The realization is \emph{fixed-point-free} if $\operatorname{Fix}(\mathcal A\mathcal J\mathscr P)=\varnothing$.
\end{definition}

\begin{proposition}[Transfer principle]
\label{tcp:prop:transfer}
For any contractive profile realization, the composition map $T=\mathcal J\mathscr P\mathcal A:B_X\to B_X$ is nonexpansive.
Moreover, $\mathcal A$ and $\mathcal J\mathscr P$ restrict to mutually inverse bijections between
\[
 \operatorname{Fix}(T) \quad\text{and}\quad \operatorname{Fix}(\mathcal A\mathcal J\mathscr P).
\]
In particular, a fixed-point-free realization yields a fixed-point-free nonexpansive self-map of~$B_X$.
\end{proposition}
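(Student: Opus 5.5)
The plan is to get nonexpansiveness by composing Lipschitz constants, and then to establish the bijection using only the identities $\mathcal A T=(\mathcal A\mathcal J\mathscr P)\mathcal A$ and $T=(\mathcal J\mathscr P)\mathcal A$ together with $(\mathcal J\mathscr P)(\mathcal A\mathcal J\mathscr P)=T(\mathcal J\mathscr P)$.

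\textbf{Nonexpansiveness.} By Definition~\ref{tcp:def:realization}, $\mathcal A:B_X\to\mathcal D$ and $\mathcal J:\mathcal D\to B_X$ are nonexpansive. By Remark~\ref{tcp:rem:propagator-metric}, which relies on Lemma~\ref{tcp:lem:envelope}, $\mathscr P:\mathcal D\to\mathcal D$ is nonexpansive in $\|\cdot\|_\infty$. The composition $T=\mathcal J\mathscr P\mathcal A$ is therefore well defined from $B_X$ into $B_X$, since each map sends its domain into the domain of the next. Its Lipschitz constant is at most the product $1\cdot1\cdot1$.

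\textbf{Forward direction.} Suppose $x\in\operatorname{Fix}(T)$ and set $z=\mathcal Ax\in\mathcal D$. Then
\[
\mathcal A\mathcal J\mathscr P z=\mathcal A(\mathcal J\mathscr P\mathcal A x)=\mathcal A(Tx)=\mathcal A x=z,
\]
so $\mathcal A$ maps $\operatorname{Fix}(T)$ into $\operatorname{Fix}(\mathcal A\mathcal J\mathscr P)$.

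\textbf{Reverse direction.} Suppose $z\in\operatorname{Fix}(\mathcal A\mathcal J\mathscr P)$ and set $x=\mathcal J\mathscr P z\in B_X$. Then
\[
Tx=\mathcal J\mathscr P(\mathcal A\mathcal J\mathscr P z)=\mathcal J\mathscr P z=x,
\]
so $\mathcal J\mathscr P$ maps $\operatorname{Fix}(\mathcal A\mathcal J\mathscr P)$ into $\operatorname{Fix}(T)$.

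\textbf{Mutual inverses.} For $x\in\operatorname{Fix}(T)$ we have $\mathcal J\mathscr P(\mathcal Ax)=Tx=x$. For $z\in\operatorname{Fix}(\mathcal A\mathcal J\mathscr P)$ we have $\mathcal A(\mathcal J\mathscr P z)=z$. Hence the two restrictions are mutually inverse, and in particular both are bijections.

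\textbf{Fixed-point-free case.} If $\operatorname{Fix}(\mathcal A\mathcal J\mathscr P)=\varnothing$, the bijection forces $\operatorname{Fix}(T)=\varnothing$, and $T$ is the required fixed-point-free nonexpansive self-map of $B_X$.

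There is no real obstacle here. The only point needing care is that all domains and codomains match, in particular that $\mathscr P(\mathcal D)\subseteq\mathcal D$. This is part of the definition of a TCP, so $\mathcal J$ can be applied after $\mathscr P$, and $\mathcal A$ lands in $\mathcal D$ where $\mathscr P$ is defined.
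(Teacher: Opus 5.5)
Your proof is correct and follows essentially the same route as the paper's. Nonexpansiveness comes from composing the three Lipschitz-$1$ maps, with $\mathscr P$ nonexpansive by Remark~\ref{tcp:rem:propagator-metric}. The bijection comes from checking that $\mathcal A$ and $\mathcal J\mathscr P$ send each fixed-point set into the other and that $\mathcal J\mathscr P\mathcal A x=x$ and $\mathcal A\mathcal J\mathscr P z=z$ on those sets.
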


\begin{proof}
The metric assertion follows from
\[
 \|Tx-Ty\|
 \le\|\mathscr P\mathcal Ax-\mathscr P\mathcal Ay\|_\infty
 \le\|\mathcal Ax-\mathcal Ay\|_\infty
 \le\|x-y\|.
\]
If $Tx=x$, then $\mathcal Ax= \mathcal A\mathcal J\mathscr P\mathcal Ax$. Conversely, if $z=\mathcal A\mathcal J\mathscr Pz$, then
$x=\mathcal J\mathscr Pz$ satisfies $Tx=x$ and $\mathcal Ax=z$. On $\operatorname{Fix}(T)$ one also has $\mathcal J\mathscr P\mathcal Ax=x$, proving the bijection.
\end{proof}

\begin{lemma}[Order domination criterion]
\label{tcp:lem:domination}
In the setting of Definition~\ref{tcp:def:realization}, suppose that
\[
\mathcal A\mathcal Jz\le z
\] 
for every $z\in\mathcal D$, and
\[
 \{z\in\mathcal D:z\le\mathscr Pz\}=\varnothing.
\]
Then $\operatorname{Fix}(\mathcal A\mathcal J\mathscr P) =\varnothing$.
\end{lemma}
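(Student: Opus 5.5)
The argument is by contradiction and takes only a few lines. Suppose $z\in\mathcal D$ is a fixed point of $\mathcal A\mathcal J\mathscr P$, so that
\[
 z=\mathcal A\mathcal J(\mathscr Pz).
\]
Since $\mathscr P$ is a self-map of $\mathcal D$ (Definition~\ref{tcp:def:propagator}), the profile $w:=\mathscr Pz$ lies in $\mathcal D$. We may therefore apply the first hypothesis, the order domination inequality, to $w$. This gives
\[
 z=\mathcal A\mathcal Jw\le w=\mathscr Pz
\]
coordinatewise.

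Hence $z\in\{z\in\mathcal D:z\le\mathscr Pz\}$. This contradicts the second hypothesis, that the set of subsolutions is empty. So $\operatorname{Fix}(\mathcal A\mathcal J\mathscr P)=\varnothing$.

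The only point needing care is where each map is applied. The inequality $\mathcal A\mathcal Jz\le z$ is assumed only for $z\in\mathcal D$, so we must apply it at $\mathscr Pz$, not at $z$. This is legitimate exactly because of the invariance $\mathscr P(\mathcal D)\subseteq\mathcal D$, which Remark~\ref{tcp:rem:propagator-metric} singles out as a separate condition. The fixed point itself lies in $\mathcal D$ because $\mathcal A$ takes values in $\mathcal D$. No metric properties are used: nonexpansiveness matters only when Proposition~\ref{tcp:prop:transfer} converts the conclusion into a fixed-point-free self-map of $B_X$. I expect no real obstacle here.
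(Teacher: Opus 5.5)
Your proof is correct and matches the paper's one-line argument: a fixed point $z=\mathcal A\mathcal J\mathscr Pz$ satisfies $z\le\mathscr Pz$, because the domination inequality applies at $\mathscr Pz\in\mathcal D$. You also point out explicitly that the invariance $\mathscr P(\mathcal D)\subseteq\mathcal D$ is what justifies applying the inequality there, a detail the paper leaves implicit.
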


\begin{proof}
A fixed point $z=\mathcal A\mathcal J\mathscr Pz$ would satisfy $z\le\mathscr Pz$.
\end{proof}

\begin{remark}
The extra hypothesis $\operatorname{Fix}(\mathcal A\mathcal J\mathscr P)=\varnothing$ in the last assertion of Proposition~\ref{tcp:prop:transfer} is essential. The transfer implication itself holds for arbitrary maps with the indicated domains; it cannot replace a proof that $\mathcal A\mathcal J\mathscr P$ has no fixed point. Likewise, the absence of fixed points for~$\mathscr P$ alone does not provide that proof. Our construction will establish the stronger order estimate required by Lemma~\ref{tcp:lem:domination}.
\end{remark}

\section{Two-profile realization}
\label{sec:realization}

We first construct the ordinal operators needed for synthesis. The realization theorem will then apply to any compact space
carrying the two families specified below.

\subsection{Monotone profiles, delay, and regularization}
\label{tcp:subsec:profiles}

For an infinite limit ordinal~$\tau$, put
\[
 \mathcal M_\tau=\big\{a\in[0,2]^\tau:a_0=0,\ a_\alpha\le a_\beta \text{ whenever }\alpha\le\beta\big\},
 \qquad \ell_\tau(a)=\sup_{\alpha<\tau}a_\alpha.
\]
The set $\mathcal M_\tau$ is closed and convex in $\ell_\infty(\tau)$. Its tail functional $\ell_\tau$ is nonexpansive and affine on
$\mathcal M_\tau$. Indeed, for $a,b\in\mathcal M_\tau$ and $0\le\theta\le1$,
\[
 \ell_\tau(\theta a+(1-\theta)b)
 =\theta\ell_\tau(a)+(1-\theta)\ell_\tau(b).
\]
The inequality $\le$ holds for every coordinate. Given $\epsilon>0$, choose one index beyond indices where $a$ and $b$ are within
$\epsilon$ of their respective suprema. Evaluation there gives the reverse inequality after letting $\epsilon\downarrow0$. Affinity is asserted only on these increasing profiles.

\begin{lemma}[Delay and regularization]
\label{tcp:lem:profile-operators}
For $a\in\mathcal M_\tau$, define
\begin{equation}
 (Q_\tau a)_0=0,\qquad
 (Q_\tau a)_{\alpha+1}=a_\alpha,\qquad
 (Q_\tau a)_\lambda=\sup_{\alpha<\lambda}a_\alpha
 \quad(0<\lambda<\tau\text{ a limit}),
 \label{tcp:eq:delay}
\end{equation}
and
\begin{equation}
 (R_\tau a)_\alpha=
 \begin{cases}
  \sup_{\beta<\alpha}a_\beta,
       &\alpha\text{ a nonzero limit},\\
  a_\alpha,&\alpha=0\text{ or a successor}.
 \end{cases}
 \label{tcp:eq:regularization}
\end{equation}
Then $Q_\tau,R_\tau$ are nonexpansive self-maps of $\mathcal M_\tau$, both preserve~$\ell_\tau$, and both lie
coordinatewise below the identity. Furthermore:
\begin{enumerate}
 \item $a\le Q_\tau a$ implies $a=0$;
 \item $R_\tau a$, regarded as a real-valued function on the ordinal space $[0,\tau)$ with its order topology, is continuous.
\end{enumerate}
\end{lemma}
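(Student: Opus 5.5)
The proof is a direct coordinatewise verification, organized in four steps.

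\emph{Step 1: $Q_\tau$ and $R_\tau$ map $\mathcal M_\tau$ into itself and lie below the identity.} Fix $a\in\mathcal M_\tau$. Every coordinate of $Q_\tau a$ and $R_\tau a$ is either a value of $a$ or a supremum of values of $a$, so it lies in $[0,2]$. The zeroth coordinates vanish.

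For monotonicity, note that for every $\alpha<\tau$ we have
\[
(Q_\tau a)_\alpha=\sup_{\beta<\alpha}a_\beta \quad (\alpha>0).
\]
For successors this holds because $a$ is increasing, so $\sup_{\beta<\alpha+1}a_\beta=a_\alpha$. For limits it is the definition. Hence $Q_\tau a$ is increasing, and $(Q_\tau a)_\alpha\le a_\alpha$.

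For $R_\tau a$, comparing any two indices reduces to the facts $\sup_{\beta<\lambda}a_\beta\le a_\lambda$ and $a_\gamma\le\sup_{\beta<\lambda}a_\beta$ for $\gamma<\lambda$. Together these give both monotonicity and $R_\tau a\le a$.

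\emph{Step 2: nonexpansiveness and preservation of $\ell_\tau$.} Each output coordinate is either a coordinate evaluation or a supremum over a subset of coordinates. Both operations are $1$-Lipschitz for the sup norm, by the same argument as in Lemma~\ref{tcp:lem:envelope}.

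Since $\tau$ is a limit ordinal, every $\alpha<\tau$ has $\alpha+1<\tau$, and $(Q_\tau a)_{\alpha+1}=a_\alpha$. Combined with $Q_\tau a\le a$, this gives $\ell_\tau(Q_\tau a)=\ell_\tau(a)$. Similarly, $R_\tau a$ agrees with $a$ at all successor indices, and these are cofinal in $\tau$. Together with $R_\tau a\le a$, this gives $\ell_\tau(R_\tau a)=\ell_\tau(a)$.

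\emph{Step 3: item 1, the only step with real content.} Suppose $a\le Q_\tau a$. Combined with Step 1, this gives $a=Q_\tau a$. I will prove $a_\alpha=0$ by transfinite induction on $\alpha$.
\begin{itemize}
\item[] The base case is $a_0=0$.
\item[] At a successor, $a_{\alpha+1}=(Q_\tau a)_{\alpha+1}=a_\alpha=0$.
\item[] At a limit $\lambda$, $a_\lambda=\sup_{\alpha<\lambda}a_\alpha=0$ by the induction hypothesis.
\end{itemize}
Hence $a=0$. The key point is that a subsolution forces equality with a delay that merely copies earlier values, so the initial value $0$ propagates through the whole ordinal. This is exactly where the normalization $a_0=0$ is used.

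\emph{Step 4: item 2, continuity of $R_\tau a$.} Put $r=R_\tau a$. In the order topology on $[0,\tau)$, the points $0$ and successor ordinals are isolated, so continuity is automatic there.

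At a nonzero limit $\lambda$, neighbourhoods have the form $(\gamma,\lambda]$ with $\gamma<\lambda$. Since $r$ is increasing, $r\le a$, and $a\le r$ at successor indices, we get
\[
\sup_{\beta<\lambda}r_\beta=\sup_{\beta<\lambda}a_\beta=r_\lambda .
\]
Given $\epsilon>0$, choose $\gamma<\lambda$ with $r_\gamma>r_\lambda-\epsilon$. Then for every $\beta\in(\gamma,\lambda]$, monotonicity gives $r_\lambda-\epsilon<r_\beta\le r_\lambda$. This proves continuity at $\lambda$.

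I expect no serious obstacle. The only points needing care are the limit-stage bookkeeping in Step 1, namely that $R_\tau a$ stays increasing across limits, and the observation in Step 4 that successor indices are cofinal below every limit.
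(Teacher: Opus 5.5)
Your proposal is correct and follows essentially the same route as the paper: coordinatewise nonexpansiveness, the bound below the identity from monotonicity of $a$, preservation of $\ell_\tau$ via cofinality of successors, transfinite induction for item 1, and continuity at a limit $\lambda$ from $\sup_{\beta<\lambda}(R_\tau a)_\beta=(R_\tau a)_\lambda$ together with monotonicity. The only cosmetic difference is that you rewrite $(Q_\tau a)_\alpha=\sup_{\beta<\alpha}a_\beta$ uniformly for $\alpha>0$ (and observe $a=Q_\tau a$ in item 1), where the paper argues directly via $(Q_\tau a)_\alpha\le a_\alpha\le(Q_\tau a)_\beta$ for $\alpha<\beta$.
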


\begin{proof}
Each coordinate in the two formulas is a constant, a coordinate evaluation, or a supremum, and is therefore nonexpansive. The
monotonicity of~$a$ gives $Q_\tau a\le a$ and $R_\tau a\le a$. For $\alpha<\beta$, one has $(Q_\tau a)_\alpha\le a_\alpha\le(Q_\tau a)_\beta$; hence $Q_\tau a$ is increasing. The same conclusion for $R_\tau a$ follows directly from its definition. Every coordinate~$a_\alpha$ occurs at the successor~$\alpha+1<\tau$ in $Q_\tau a$, proving preservation of the supremum. Successor ordinals are cofinal in~$\tau$, and $R_\tau$ leaves their coordinates unchanged, which proves the corresponding assertion for~$R_\tau$.

If $a\le Q_\tau a$, transfinite induction gives $a_\alpha=0$: the assertion starts at~$0$, is inherited from the predecessor at a
successor, and follows from the supremum of the previous zero coordinates at a limit. For continuity, the only nonisolated points
of $[0,\tau)$ are its nonzero limits. At such a point~$\lambda$, cofinality of the successors gives
\[
 \sup_{\beta<\lambda}(R_\tau a)_\beta
 =\sup_{\beta<\lambda}a_\beta=(R_\tau a)_\lambda.
\]
Since $R_\tau a$ is increasing, its values on $(\beta,\lambda]$ approach its value at $\lambda$. Such intervals are neighborhoods of $\lambda$ in $[0,\tau)$, which proves continuity.
\end{proof}

\subsection{Positive operators on ordinal spaces}
\label{subsec:positive-operators}

We write $C_0([0,\tau))$ for the continuous functions on this locally compact ordinal space which vanish at infinity. Every interval $[0,\alpha]$, $\alpha<\tau$, is compact and clopen. These intervals form an open cover of $[0,\tau)$, so every compact subset is contained in one of them, by a finite subcover.

\begin{lemma}[Positive synthesis from an increasing family]
\label{tcp:lem:positive-synthesis}
Let $(w_\alpha)_{\alpha<\tau}\subset C(K,[0,1])$ be increasing.
There is a unique positive linear contraction
\[
 S_w:C_0([0,\tau))\longrightarrow C(K),\qquad
 S_w\mathbf1_{[0,\alpha]}=w_\alpha.
\]
For each $t\in K$, its evaluation at~$t$ is represented by a finite
positive Radon measure~$\mu_t$ such that
\begin{equation}
 \mu_t([0,\alpha])=w_\alpha(t),\qquad
 \mu_t([0,\tau))=\sup_{\alpha<\tau}w_\alpha(t)\le1.
 \label{tcp:eq:representing-mass}
\end{equation}
\end{lemma}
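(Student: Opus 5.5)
The plan is to define $S_w$ first on the dense subspace spanned by the indicators $\mathbf1_{[0,\alpha]}$ and then extend it by continuity. First I show that the linear span $\mathcal L$ of $\{\mathbf1_{[0,\alpha]}:\alpha<\tau\}$ is dense in $C_0([0,\tau))$. Each $\mathbf1_{[0,\alpha]}$ is continuous and compactly supported, since $[0,\alpha]$ is compact and clopen. The span contains the indicators of all intervals $(\beta,\alpha]$, and these are clopen. These indicators separate points of $[0,\tau)$. Hence $\mathcal L$ is a subalgebra (the products satisfy $\mathbf1_{[0,\alpha]}\mathbf1_{[0,\beta]}=\mathbf1_{[0,\min(\alpha,\beta)]}$), it vanishes nowhere, and it separates points. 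Locally compact Stone--Weierstrass then gives density. Alternatively one can argue directly: for $f\in C_0$ and $\epsilon>0$, the set $\{|f|\ge\epsilon\}$ is compact, so it lies in some $[0,\gamma]$. On the compact ordinal $[0,\gamma]$, $f$ is uniformly approximable by step functions on finitely many intervals $(\beta_i,\beta_{i+1}]$, by a standard compactness argument using continuity at limits from the left.

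Next I define $S_w$ on $\mathcal L$. Every $g\in\mathcal L$ can be written uniquely as $g=\sum_{i=0}^{n} c_i\mathbf1_{(\beta_{i-1},\beta_i]}$ with $\beta_{-1}=-1$, that is, with first block $[0,\beta_0]$, and increasing $\beta_i$. Put
\[
S_w g=\sum_i c_i\,(w_{\beta_i}-w_{\beta_{i-1}}),\qquad w_{-1}:=0.
\]
Well-definedness follows because refining the partition gives telescoping sums, so any two representations yield the same value on a common refinement. Linearity then follows by passing to common refinements. Since $w$ is increasing, each difference $w_{\beta_i}-w_{\beta_{i-1}}$ is $\ge0$. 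Summing over the blocks at a point $t$ gives
\[
|S_wg(t)|\le \max_i|c_i|\,w_{\beta_n}(t)\le\|g\|_\infty.
\]
So $S_w$ is a positive contraction on $\mathcal L$, and it extends uniquely to $C_0([0,\tau))$. Positivity survives the extension. To see this, approximate $f\ge0$ by $g\in\mathcal L$ and replace $g$ by $g^+$, which is still in $\mathcal L$ because it just takes positive parts of the coefficients and is at least as close to $f$. Uniqueness holds because any bounded operator with the prescribed values on indicators agrees with $S_w$ on the dense set $\mathcal L$, and positive linear maps between these Banach lattices are automatically bounded.

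For the representation, fix $t\in K$. The functional $f\mapsto (S_wf)(t)$ is positive with norm at most $1$ on $C_0([0,\tau))$, so Riesz--Markov gives a finite positive Radon measure $\mu_t$ representing it. Integrating the indicator of the clopen compact set $[0,\alpha]$ gives $\mu_t([0,\alpha])=w_\alpha(t)$. The sets $[0,\alpha]$ increase to $[0,\tau)$, and inner regularity on the open set $[0,\tau)$ ensures that each compact set sits inside some $[0,\alpha]$. Hence
\[
\mu_t([0,\tau))=\sup_\alpha\mu_t([0,\alpha])=\sup_\alpha w_\alpha(t)\le1.
\]
Here the uncountable-cofinality issue is avoided because we use inner regularity rather than countable additivity. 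I expect the density step to be the main obstacle, and Stone--Weierstrass is the first tool I would try for it.
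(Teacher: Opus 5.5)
Your proposal is correct and follows essentially the same route as the paper. You use density of the span of the indicators $\mathbf 1_{[0,\alpha]}$ via locally compact Stone--Weierstrass, then a summation-by-parts formula whose nonnegative weights $w_{\beta_i}-w_{\beta_{i-1}}$ telescope to $w_{\beta_n}(t)\le 1$ (your block coefficients $c_i$ are the paper's tail sums $q_i$). Positivity survives the extension by taking positive parts, and the total mass comes from Riesz representation with inner regularity. The only difference is cosmetic: you get well-definedness on the span by passing to common refinements, so ``uniquely'' should read ``uniquely up to refinement''. The paper instead notes that the $\mathbf 1_{[0,\alpha]}$ are linearly independent.
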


\begin{proof}
Put $e_\alpha=\mathbf1_{[0,\alpha]}$. These functions are linearly independent: in a finite relation, evaluation on successive intervals determines all tail sums of the coefficients and hence all coefficients. Their span $V$ is a subalgebra and a sublattice of $C_0([0,\tau))$: its elements are step functions with finitely many breakpoints and bounded support, and products, maxima, and minima retain that form. If $\alpha<\beta$, then $e_\alpha$ separates them; also $e_\alpha(\alpha)=1$. Thus $V$ separates points and vanishes at no point. The locally compact Stone--Weierstrass theorem makes~$V$ uniformly dense. Prescribe $S_we_\alpha=w_\alpha$ on~$V$. For $\alpha_1<\cdots<\alpha_n$, set
\[
 z=\sum_{i=1}^n c_i e_{\alpha_i},\qquad
 q_i=\sum_{j=i}^n c_j.
\]
The values of $z$ on its successive intervals are $q_1,\ldots,q_n$, and $z$ is zero afterwards. Thus $\|z\|_\infty=\max_i|q_i|$.
Summation by parts gives
\[
 S_wz=q_1w_{\alpha_1}
       +\sum_{i=2}^n q_i(w_{\alpha_i}-w_{\alpha_{i-1}}).
\]
For each $t\in K$, the multipliers of the~$q_i$ are nonnegative and sum to $w_{\alpha_n}(t)\le1$. Hence
$\|S_wz\|_\infty\le\|z\|_\infty$; if $z\ge0$, all~$q_i$ are nonnegative and $S_wz\ge0$. Density gives the unique contractive
extension. Positivity persists because~$V$ is a sublattice: the positive parts of step approximants approximate any nonnegative
member of~$C_0([0,\tau))$.

The Riesz representation theorem supplies~$\mu_t$. Evaluation at $e_\alpha$ yields the first equality in
\eqref{tcp:eq:representing-mass}. Inner regularity, together with the fact that every compact subset is contained in some $[0,\alpha]$, gives the second equality. In particular, the mass identity uses inner regularity, not continuity from below for an uncountable union.
\end{proof}

\subsection{A realization theorem with two monotone profiles}
\label{tcp:subsec:two-profile-realization}

\begin{definition}[An inseparable pair of increasing families]
\label{tcp:def:inseparable-pair}
Let $\rho,\sigma$ be infinite cardinals. An inseparable pair on~$K$ consists of increasing families
\[
 (u_\alpha)_{\alpha<\rho},\quad
 (v_\beta)_{\beta<\sigma}\quad\text{in }C(K,[0,1])
\]
satisfying
\begin{equation}
 u_0=v_0=0,\qquad u_\alpha v_\beta=0
 \quad(\alpha<\rho,\ \beta<\sigma),
 \label{tcp:eq:orthogonal-families}
\end{equation}
and, with $F_\alpha=\{u_\alpha=1\}$ and
$G_\beta=\{v_\beta=1\}$,
\begin{equation}
 \overline{\bigcup_{\alpha<\rho}F_\alpha}
 \cap
 \overline{\bigcup_{\beta<\sigma}G_\beta}\ne\varnothing.
 \label{tcp:eq:terminal-intersection}
\end{equation}
No continuity of either family with respect to its ordinal index is required.
\end{definition}

\begin{theorem}[Two-profile realization]
\label{tcp:thm:two-profile-realization}
Suppose that a compact Hausdorff space~$K$ admits an inseparable pair as in Definition~\ref{tcp:def:inseparable-pair}. There exist a nonempty closed bounded convex set
\[
 \mathcal D_0\subset \ell_\infty(\rho)\times\ell_\infty(\sigma)
\]
with the maximum norm, and nonexpansive maps
\[
 \mathcal A_0:B_{C(K)}\to\mathcal D_0,
 \qquad
 \mathcal J_0:\mathcal D_0\to B_{C(K)},
 \qquad
 P_0:\mathcal D_0\to\mathcal D_0,
\]
such that
\[
 \mathcal A_0\mathcal J_0z\le z\quad(z\in\mathcal D_0),
 \qquad
 \operatorname{Fix}(\mathcal A_0\mathcal J_0P_0)=\varnothing.
\]
Consequently $\mathcal J_0P_0\mathcal A_0$ is a fixed-point-free nonexpansive self-map of~$B_{C(K)}$.
\end{theorem}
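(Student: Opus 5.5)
The plan is to take $P_0=Q_\rho\times Q_\sigma$ from Lemma~\ref{tcp:lem:profile-operators} on a domain cut out by one affine tail constraint, and then to verify the two hypotheses of Lemma~\ref{tcp:lem:domination}. Fix a point $p$ in the intersection \eqref{tcp:eq:terminal-intersection}. Put
\[
 \mathcal D_0=\{(a,b)\in\mathcal M_\rho\times\mathcal M_\sigma:\ \ell_\rho(a)+\ell_\sigma(b)=2\}.
\]
\emph{Domain and delay.} $\mathcal D_0$ is closed, since $\ell_\rho$ and $\ell_\sigma$ are nonexpansive. It is convex, by affinity of $\ell_\tau$ on increasing profiles. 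It is nonempty: for instance take $a_\alpha=1$ for $\alpha\ge1$, and the same for $b$. By Lemma~\ref{tcp:lem:profile-operators}, $P_0(a,b)=(Q_\rho a,Q_\sigma b)$ is nonexpansive and preserves both tails, so it maps $\mathcal D_0$ into itself. A subsolution $(a,b)\le P_0(a,b)$ forces $a=0$ and $b=0$ by part~1 of that lemma. This contradicts $\ell_\rho(a)+\ell_\sigma(b)=2$, so no subsolution exists.

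\emph{Analysis.} For $f\in B_{C(K)}$ set
\[
 (\mathcal A_0f)^1_\alpha=\min\Bigl(\max\bigl(0,\,1-\inf_{F_\alpha}f\bigr),\,1-f(p)\Bigr),\qquad
 (\mathcal A_0f)^2_\beta=\min\Bigl(\max\bigl(0,\,1+\sup_{G_\beta}f\bigr),\,1+f(p)\Bigr),
\]
with $\inf\varnothing=+\infty$ and $\sup\varnothing=-\infty$. Since $u_0=v_0=0$, we have $F_0=G_0=\varnothing$, so both profiles start at $0$. They take values in $[0,2]$, and they are increasing because $F_\alpha$ and $G_\beta$ increase. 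Each coordinate is nonexpansive in $f$. Because $p\in\overline{\bigcup F_\alpha}$, we get $\inf_{\bigcup F_\alpha}f\le f(p)$, so the supremum of the first profile equals $1-f(p)$ exactly. Likewise the second tail is $1+f(p)$. Hence the tails sum to $2$ and $\mathcal A_0f\in\mathcal D_0$. The cap at $p$ is exactly what turns the natural inequality ``tails sum $\ge2$'' into the affine constraint.

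\emph{Synthesis.} This is the main step. For $z=(a,b)\in\mathcal D_0$, let $c=1-\ell_\rho(a)=\ell_\sigma(b)-1$; this common center is forced by the two tails. Define
\[
 \mathcal J_0z=c+S_u\bigl(\ell_\rho(a)-R_\rho a\bigr)-S_v\bigl(\ell_\sigma(b)-R_\sigma b\bigr),
\]
using Lemma~\ref{tcp:lem:positive-synthesis}. The arguments lie in $C_0$: $R_\tau a$ is continuous by Lemma~\ref{tcp:lem:profile-operators}, and since it is increasing with supremum $\ell_\tau(a)$, the difference $\ell_\tau(a)-R_\tau a$ is small off some $[0,\alpha]$. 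Thus $\mathcal J_0z\in C(K)$.

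The key computation uses the measures $\mu_t$ of \eqref{tcp:eq:representing-mass}. Let $m_t=\mu_t([0,\rho))$. By \eqref{tcp:eq:orthogonal-families}, at each $t$ at most one of the two operator terms is nonzero. On the $u$-side,
\[
 \mathcal J_0z(t)=(1-m_t)\,c+\int(1-R_\rho a)\,d\mu_t ,
\]
using $c+\ell_\rho(a)=1$. This is a convex combination of the $1$-Lipschitz maps $z\mapsto c$ and $z\mapsto1-(R_\rho a)_\gamma$, all with values in $[-1,1]$; the $v$-side is symmetric. Hence $\mathcal J_0z\in B_{C(K)}$, and $\mathcal J_0$ is nonexpansive in the max norm.

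The point I expect to need most care is exactly this rewriting. Continuity in $t$ cannot come from $m_t$, which may jump. It comes only from the operator form, and that form is available precisely because the center matches both tails.

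\emph{Domination.} If $t\in F_\alpha$, then $\mu_t([0,\alpha])=1=m_t$. So $\mu_t$ lives on $[0,\alpha]$, where $R_\rho a\le a_\alpha$, giving $\mathcal J_0z(t)\ge1-a_\alpha$. Hence $\max(0,1-\inf_{F_\alpha}\mathcal J_0z)\le a_\alpha$, and the extra $\min$ only lowers it, so $(\mathcal A_0\mathcal J_0z)^1\le a$. Symmetrically, on $G_\beta$ we get $\mathcal J_0z\le b_\beta-1$, so $(\mathcal A_0\mathcal J_0z)^2\le b$.

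Lemma~\ref{tcp:lem:domination} then gives $\operatorname{Fix}(\mathcal A_0\mathcal J_0P_0)=\varnothing$. The proof of Proposition~\ref{tcp:prop:transfer} uses only nonexpansiveness and the domains, so it applies verbatim and shows that $\mathcal J_0P_0\mathcal A_0$ is a fixed-point-free nonexpansive self-map of $B_{C(K)}$.
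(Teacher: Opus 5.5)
Your proof is correct, and it shares the paper's architecture: the same delay $P_0=(Q_\rho a,Q_\sigma b)$, the same positive operators and representing measures, the same regularized profiles, and the same domination argument on $F_\alpha$ and $G_\beta$. The real difference is where you absorb the slack $\ell_\rho(a)+\ell_\sigma(b)-2$.

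The paper keeps the uncapped deficits $a_\alpha(f)=\sup_{F_\alpha}(1-f)$ and the larger domain $\ell_\rho(a)+\ell_\sigma(b)\ge 2$. It must then use the symmetric center $c=(B-A)/2$ together with the truncations $(1-R_\rho a-c)^+$ and $(1-R_\sigma b+c)^+$. Without them, these functions would tend to the negative value $1-(A+B)/2$ when $A+B>2$, and so would not lie in $C_0$. This is why the paper's convex integral has $\max\{c,1-R_\rho a\}$ and $\min\{c,R_\sigma b-1\}$ as integrands.

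You instead cap the deficits at $1\mp f(p)$. This makes the analyzed tails exactly $1\mp f(p)$ and lets you work on the affine slice $A+B=2$. There the center is forced, and the integrand is simply $1-R_\rho a$, with no truncation needed since $R_\rho a\le A$. On your slice the paper's synthesis coincides with yours, because $(B-A)/2=1-A=B-1$ and $(1-R_\rho a-c)^+=A-R_\rho a$.

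The price is that your analysis map depends on the chosen point $p$ of the terminal intersection. It also needs the exact tail identity $\sup_\alpha\min\{x_\alpha,k\}=\min\{\sup_\alpha x_\alpha,k\}$. The paper's analysis map is canonical and needs only the inequality at $p$ to land in its domain. What you gain is a simpler synthesis, with a forced center and linear integrands.
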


\begin{proof}
Define
\begin{equation}
 \mathcal D_0=
 \{(a,b)\in\mathcal M_\rho\times\mathcal M_\sigma:
       \ell_\rho(a)+\ell_\sigma(b)\ge2\}.
 \label{tcp:eq:profile-domain}
\end{equation}
It is nonempty: take both profiles equal to~$1$ outside their zero coordinate. Closedness follows from the continuity of the tail
functionals, convexity from their affinity on increasing profiles, and boundedness from the coordinate bounds. Set
\begin{equation}
 P_0(a,b)=(Q_\rho a,Q_\sigma b).
 \label{tcp:eq:product-propagator}
\end{equation}
Lemma~\ref{tcp:lem:profile-operators} proves that this map is nonexpansive and preserves~$\mathcal D_0$.

\smallskip\noindent
\emph{The analysis map.} For $f\in B_{C(K)}$, define
\begin{equation}
 a_\alpha(f)=\sup_{t\in F_\alpha}(1-f(t)),\qquad
 b_\beta(f)=\sup_{t\in G_\beta}(1+f(t)),\qquad
 \mathcal A_0f=(a(f),b(f)),
 \label{tcp:eq:analysis}
\end{equation}
where a supremum over an empty set is assigned the value~$0$ in these two formulas. Both deficits are nonnegative and at most~$2$.
Nestedness of the level sets gives increasing profiles, and their initial coordinates are zero. Each coordinate functional is
nonexpansive, hence so is~$\mathcal A_0$. Choose $p$ in the intersection in \eqref{tcp:eq:terminal-intersection}. Continuity of~$f$ gives
\[
 \ell_\rho(a(f))\ge1-f(p),\qquad
 \ell_\sigma(b(f))\ge1+f(p).
\]
Their sum is at least~$2$, so $\mathcal A_0(B_{C(K)})\subseteq\mathcal D_0$.

\smallskip\noindent
\emph{The synthesis map.} Apply Lemma~\ref{tcp:lem:positive-synthesis} to obtain positive linear contractions
\[
 S_+:C_0([0,\rho))\to C(K),\qquad
 S_-:C_0([0,\sigma))\to C(K)
\]
with $S_+\mathbf1_{[0,\alpha]}=u_\alpha$ and $S_-\mathbf1_{[0,\beta]}=v_\beta$. Let their representing measures
at~$t$ be $\mu_t,\nu_t$, with total masses $m_t,n_t$. By \eqref{tcp:eq:representing-mass} and
\eqref{tcp:eq:orthogonal-families},
\begin{equation}
 m_t=\sup_\alpha u_\alpha(t),\qquad
 n_t=\sup_\beta v_\beta(t),\qquad m_t+n_t\le1.
 \label{tcp:eq:mass-bound}
\end{equation}
Indeed, if $m_t>0$, some $u_\alpha(t)>0$, and orthogonality forces every $v_\beta(t)$ to vanish, so $n_t=0$. Interchanging the families gives the other case, and each mass is at most~$1$.

For $z=(a,b)\in\mathcal D_0$, write
\begin{equation}
 A=\ell_\rho(a),\qquad B=\ell_\sigma(b),\qquad
 c(z)=\frac{B-A}{2}.
 \label{tcp:eq:center}
\end{equation}
Since $0\le A,B\le2$ and $A+B\ge2$, we have $c(z)\in[-1,1]$ and
\begin{equation}
 1-A\le c(z)\le B-1.
 \label{tcp:eq:center-bounds}
\end{equation}
Furthermore, if $\|z-z'\|\le d$, then $|A-A'|,|B-B'|\le d$, and consequently $|c(z)-c(z')|\le d$.

With $r^+=\max\{r,0\}$, define the continuous ordinal functions
\begin{equation}
 x_+(z)=(1-R_\rho a-c(z))^+, \qquad x_-(z)=(1-R_\sigma b+c(z))^+.
 \label{tcp:eq:synthesis-profiles}
\end{equation}
They are nonincreasing, and their tail limits are respectively $(1-A-c(z))^+$ and $(1-B+c(z))^+$, both equal to
\[
 \left(1-\frac{A+B}{2}\right)^+=0.
\]
Thus $x_+(z)\in C_0([0,\rho))$ and $x_-(z)\in C_0([0,\sigma))$. Here vanishing on a tail up to an arbitrary positive tolerance implies vanishing at infinity, since closed initial ordinal intervals are compact. Set
\begin{equation}
\mathcal J_0z=c(z)\mathbf1+S_+x_+(z)-S_-x_-(z).
 \label{tcp:eq:synthesis}
\end{equation}
This formula defines a continuous function on $K$, since both operator arguments belong to their stated $C_0$-domains. No
continuity of the total-mass functions $t\mapsto m_t,n_t$ is needed.

\smallskip\noindent
\emph{The exact nonexpansive estimate.} Using the representing measures, expand \eqref{tcp:eq:synthesis} at a fixed $t\in K$ as
\begin{align}
 (\mathcal J_0z)(t)
 ={}&(1-m_t-n_t)c(z)\notag\\
 &+\int_{[0,\rho)}
       \max\{c(z),1-(R_\rho a)_\alpha\}\,d\mu_t(\alpha)\notag\\
 &+\int_{[0,\sigma)}
       \min\{c(z),(R_\sigma b)_\beta-1\}\,d\nu_t(\beta).
 \label{tcp:eq:convex-integral}
\end{align}
The integrands are bounded continuous functions, so these integrals are well defined for the finite measures. Each displayed scalar value lies in $[-1,1]$. The coefficients and measures are nonnegative and have total weight $(1-m_t-n_t)+m_t+n_t=1$.
It follows that $|(\mathcal J_0z)(t)|\le1$.

If $\|z-z'\|\le d$, the center and every coordinate of the two regularized profiles change by at most~$d$. For real numbers,
\[
 |\max\{r,s\}-\max\{r',s'\}|
 \le\max\{|r-r'|,|s-s'|\},
\]
and the same estimate holds for~$\min$. Therefore every integrand in \eqref{tcp:eq:convex-integral} changes by at most~$d$.
The weights are independent of~$z$, so
\[
 |(\mathcal J_0z)(t)-(\mathcal J_0z')(t)|\le d.
\]
Taking the supremum over~$t$ proves that $\mathcal J_0:\mathcal D_0\to B_{C(K)}$ is nonexpansive.

\smallskip\noindent
\emph{The order estimate.} Fix $\alpha<\rho$ and $t\in F_\alpha$. Then $\mu_t([0,\alpha])=u_\alpha(t)=1$. In view of \eqref{tcp:eq:mass-bound}, $m_t=1$, $n_t=0$, and $\mu_t$ is a probability measure concentrated on $[0,\alpha]$. For $\gamma\le\alpha$,
\[
(R_\rho a)_\gamma\le a_\gamma\le a_\alpha.
\]
Equation~\eqref{tcp:eq:convex-integral} now yields $(\mathcal J_0z)(t)\ge1-a_\alpha$. Similarly, for $t\in G_\beta$, $\nu_t$ is a probability measure concentrated on $[0,\beta]$ and $(\mathcal J_0z)(t)\le b_\beta-1$. Taking the suprema in \eqref{tcp:eq:analysis}, including the prescribed empty-set convention, gives
\begin{equation}
\mathcal A_0\mathcal J_0(a,b)\le(a,b).
 \label{tcp:eq:order-realization}
\end{equation}

\smallskip\noindent
\emph{Exclusion of fixed points.} If $z=\mathcal A_0\mathcal J_0P_0z$, then \eqref{tcp:eq:order-realization} implies $z\le P_0z$. Thus $a\le Q_\rho a$ and $b\le Q_\sigma b$. Lemma~\ref{tcp:lem:profile-operators} gives $a=b=0$, contradicting
$\ell_\rho(a)+\ell_\sigma(b)\ge2$. Finally, the proof of Proposition~\ref{tcp:prop:transfer}, which applies equally to the product profile space, shows that $\mathcal J_0P_0\mathcal A_0$ is fixed-point-free.
\end{proof}

\begin{remark}
The use of two tails in \eqref{tcp:eq:center} allows $\rho$ and $\sigma$ to be different. The terminal obstruction supplies the
inequality $A+B\ge2$, without requiring either tail separately to equal~$1$. The resulting common center ensures continuity of the
synthesis, and its convex integral representation preserves the Lipschitz constant~$1$.
\end{remark}

\section{Inseparable families on compact \texorpdfstring{$F$}{F}-spaces}
\label{sec:families}
\label{tcp:subsec:families}

We now verify the hypotheses of Theorem~\ref{tcp:thm:two-profile-realization}. A compact space is
\emph{zero-dimensional} if it has a base of clopen sets. The two topological cases will provide families of possibly different lengths.

\subsection{Countable unions of cozero sets}
\label{tcp:subsec:topological-preliminaries}

\begin{lemma}
\label{tcp:lem:countable-cozero}
A countable union of cozero sets in a compact Hausdorff space is a cozero set. Consequently, in an $F$-space, two disjoint countable unions of cozero sets have disjoint closures.
\end{lemma}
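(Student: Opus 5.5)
The first assertion follows from an explicit weighted series, and the second from the first together with the $F$-space characterization quoted in the introduction.

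\emph{Countable unions.} Let $U_n=\operatorname{coz}(f_n)$ with $f_n\in C(K)$, $n\ge1$. Replacing $f_n$ by $\min\{|f_n|,1\}$ does not change its cozero set, so we may assume $0\le f_n\le1$. Define
\[
 f=\sum_{n\ge1}2^{-n}f_n .
\]
The series converges uniformly by the Weierstrass M-test, so $f\in C(K)$. Every term is nonnegative, hence $f(t)>0$ exactly when $f_n(t)>0$ for some $n$. This gives $\operatorname{coz}(f)=\bigcup_n U_n$. A finite union is the special case in which all but finitely many $f_n$ vanish, and the empty union is $\operatorname{coz}(0)$. Compactness plays no role beyond the boundedness of the $f_n$, which the truncation already arranges.

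\emph{Disjoint closures.} Let $U=\bigcup_n U_n$ and $V=\bigcup_m V_m$ be disjoint countable unions of cozero sets. By the first assertion, $U$ and $V$ are themselves cozero sets, and they are disjoint. The characterization of compact $F$-spaces recalled in the introduction (\cite[Lemma~3.1]{AJLCS}) states that disjoint cozero sets have disjoint closures. Applying it to $U$ and $V$ gives $\overline U\cap\overline V=\varnothing$.

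There is no real obstacle here. The only points needing care are the truncation to $[0,1]$, which secures uniform convergence, and the nonnegativity of the terms, which prevents cancellation in the sum.
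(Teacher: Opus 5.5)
Your proposal is correct and follows essentially the same route as the paper: a uniformly convergent weighted series $\sum_n 2^{-n}g_n$ of nonnegative normalizations of the $f_n$, followed by the $F$-space characterization applied to the two resulting cozero sets. The only cosmetic difference is that you normalize by truncating to $\min\{|f_n|,1\}$, whereas the paper divides $|f_n|$ by $1+\|f_n\|_\infty$.
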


\begin{proof}
If $U_n=\operatorname{coz}(f_n)$, the uniformly convergent series
\[
 f=\sum_{n=1}^\infty 2^{-n}\frac{|f_n|}{1+\|f_n\|_\infty}
\]
defines a continuous nonnegative function with $\operatorname{coz}(f)=\bigcup_n U_n$. The last assertion follows from the $F$-space property.
\end{proof}

\subsection{Boolean gaps and the zero-dimensional case}

We recall the Boolean-algebra terminology used below; see
\cite{Koppelberg} for background. A \emph{Boolean algebra} is a bounded
complemented distributive lattice, with meet $a\wedge b$, join
$a\vee b$, complement $\neg a$, and least and greatest elements
$0,1$. Its order is given by $a\le b$ if $a\wedge b=a$.
It is \emph{complete} if every subset $S$ has a least upper bound,
denoted by $\bigvee S$; the empty join is~$0$.
For a compact space~$K$, the clopen subsets form the Boolean algebra
$\operatorname{Clop}(K)$ under intersection, finite union, and
complementation in~$K$, with order given by inclusion.
For an arbitrary family of clopens, its Boolean supremum, when it
exists, means its least clopen upper bound and need not equal its
set-theoretic union.

A \emph{chain} is a subset linearly ordered by $\le$, and a
\emph{maximal chain} is one maximal under inclusion. A subset $C$
of a linearly ordered set $L$ is \emph{cofinal} if every member of
$L$ lies below some member of $C$, and \emph{coinitial} if every
member of $L$ lies above some member of $C$. The least sizes of such
subsets are the cofinality and coinitiality of $L$, respectively.
We call an increasing family $(f_\alpha)$ and a decreasing family
$(h_\beta)$ with $f_\alpha\le h_\beta$ a \emph{gap} when there is
no \emph{interpolant} $d$ satisfying $f_\alpha\le d\le h_\beta$
for every $\alpha,\beta$.

The next lemma converts the failure of Boolean completeness into a
gap in a maximal chain, whose two sides will provide the families
needed for the topological realization.

\begin{lemma}[A gap in a maximal Boolean chain]
\label{tcp:lem:boolean-gap}
If a Boolean algebra~$\mathfrak B$ is not complete, there exist infinite regular cardinals $\rho,\sigma$ and families
$(f_\alpha)_{\alpha<\rho}$, $(h_\beta)_{\beta<\sigma}$ in $\mathfrak B$ such that
\[
 f_0=0,\quad h_0=1,\quad
 f_\alpha\le f_{\alpha'}\le h_{\beta'}\le h_\beta
 \quad(\alpha\le\alpha',\ \beta\le\beta'),
\]
and no $d\in\mathfrak B$ satisfies
$f_\alpha\le d\le h_\beta$ for all $\alpha,\beta$.
\end{lemma}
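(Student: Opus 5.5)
The plan is to find a subset of $\mathfrak B$ without a supremum, reduce it to a \emph{chain} without a supremum, and then embed that chain in a maximal chain. The two sides of the resulting cut give the gap, and passing to cofinal and coinitial subsequences of regular length gives the families.

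\emph{Step 1 (a chain without supremum).} Since $\mathfrak B$ is not complete, some $S\subseteq\mathfrak B$ has no least upper bound. Replace $S$ by the set $I$ of finite joins of its members. Then $I$ is upward directed, and $I$ has the same upper bounds as $S$, so it still has no supremum. Next I will invoke Iwamura's lemma: if every chain in a poset has a supremum, then every directed subset does. Its proof is by induction on $|I|$. An infinite directed set $I$ is written as an increasing union $\bigcup_{\xi<|I|}I_\xi$ of directed subsets of smaller cardinality. By the inductive hypothesis each $I_\xi$ has a supremum $s_\xi$; these form a chain, and the supremum of that chain is the supremum of $I$. Applying the lemma contrapositively, we obtain a chain $C\subseteq\mathfrak B$ with no supremum. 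I expect this reduction to be the main technical point. Everything afterwards uses only maximality.

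\emph{Step 2 (the cut in a maximal chain).} By Zorn's lemma, extend $C\cup\{0,1\}$ to a maximal chain $M$. Let $D=\{x\in M: x\le c \text{ for some } c\in C\}$ and $U=M\setminus D$. The properties of this cut are as follows.
\begin{enumerate}
 \item $0\in D$, and $D$ has no largest element. A largest element of $D$ would dominate $C\subseteq D$ and lie below some $c\in C$, so it would be the maximum of $C$, hence its supremum.
 \item $1\in U$, by the same argument.
 \item Every element of $U$ is an upper bound of $C$, because $M$ is a chain.
\end{enumerate}
The key claim is that no $d\in\mathfrak B$ satisfies $D\le d\le U$. Suppose such a $d$ exists. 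Then $d$ is an upper bound of $C$, and since $C$ has no supremum, there is an upper bound $e$ of $C$ with $d\not\le e$. Put $d'=d\wedge e$. Then $d'$ is an upper bound of $C$ with $d'<d$, and we check that it is comparable with every element of $M$. Every $x\in D$ lies below some member of $C$, so $x\le d'$. Every $y\in U$ satisfies $y\ge d>d'$. By maximality, $d'\in M$. Moreover $d'\notin D$, since otherwise $d'$ would be the maximum of $C$. Hence $d'\in U$, so $d\le d'$, a contradiction. In particular $U$ has no least element, since such an element would be an interpolant.

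\emph{Step 3 (regular indexing).} Let $\rho=\operatorname{cf}(D)$ and let $\sigma$ be the coinitiality of $U$. Both are infinite, because $D$ has no maximum and $U$ has no minimum, and both are regular, as cofinalities of linear orders. Choose a strictly increasing cofinal family $(f_\alpha)_{\alpha<\rho}$ in $D$ with $f_0=0$. Similarly, choose a strictly decreasing coinitial family $(h_\beta)_{\beta<\sigma}$ in $U$ with $h_0=1$, built by transfinite recursion that at each stage goes past the earlier terms and past the next element of an enumerated coinitial set. The required inequalities follow from $D<U$ in $M$. An interpolant $d$ for the families would satisfy $x\le f_\alpha\le d\le h_\beta\le y$ for suitable $\alpha,\beta$, for every $x\in D$ and $y\in U$, by cofinality and coinitiality. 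It would therefore interpolate $D$ and $U$, contradicting Step 2.
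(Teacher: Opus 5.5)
Your proof is correct. From Step 2 onward it follows the paper's architecture: extend a chain without supremum to a maximal chain $M$, cut $M$ into the part lying below the chain and its complement, and pass to cofinal and coinitial sequences of regular length.

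In Step 2 you use a single $d\wedge e$ argument to exclude an interpolant. The paper does this in two stages. First it shows that $Z$ has no least member, using $z\wedge b$. Then it observes that an interpolant is comparable with all of $M$, hence lies in $M$, and would therefore be a greatest member of $Y$ or a least member of $Z$. The two versions are equivalent.

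The genuine difference is Step 1. The paper does not pass to finite-join closures or invoke Iwamura's lemma. Instead it chooses a family $(d_\xi)_{\xi<\eta}$ without supremum of least possible cardinality. Minimality guarantees that every partial join $c_\alpha=\bigvee_{\xi<\alpha}d_\xi$ exists. These $c_\alpha$ form a well-ordered chain with the same upper bounds as $(d_\xi)$, so the chain has no supremum. This is the cardinality induction behind Iwamura's lemma, specialized to a lattice: partial joins take the place of decomposing a directed set into an increasing union of smaller directed subsets.

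The paper's route is therefore self-contained. It avoids the directed-closure construction, which you only sketch and which needs separate care in the countable case. Your route proves a more general statement about arbitrary posets, but that generality is not needed for a Boolean algebra.

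Similarly, in Step 3 you cite regularity of cofinalities of linear orders as a known fact. The paper checks this directly by the same minimality argument.
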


\begin{proof}
We include a proof of the maximal-chain gap fact; see also~\cite[p.~344]{Monk}. Choose a family $(d_\xi)_{\xi<\eta}$ without a supremum, of least possible cardinality. Since finite joins exist, $\eta$ is infinite. For every $\alpha<\eta$, the partial join $c_\alpha=\bigvee_{\xi<\alpha}d_\xi$ exists by minimality. The chain $(c_\alpha)$ has no supremum, since its upper bounds are exactly those of $(d_\xi)$.

Extend it to a maximal chain~$M$ in~$\mathfrak B$, and put
\[
 Y=\{m\in M:m\le c_\alpha\text{ for some }\alpha<\eta\}, \qquad Z=M\setminus Y.
\]
Then $Y<Z$, $0\in Y$, and $1\in Z$. A greatest member~$y$ of~$Y$ would be an upper bound of the $c_\alpha$ lying below one of them, and hence would be a greatest member of that chain, a contradiction.

Suppose that~$Z$ has a least member~$z$, and let $b\in\mathfrak B$ be any upper bound of the $c_\alpha$. Every member of~$Y$ lies below $z\wedge b$, and every member of~$Z$ lies above it. Thus $z\wedge b$ is comparable with every member of~$M$, so maximality
puts it in~$M$. It cannot lie in~$Y$, because it is an upper bound of all the $c_\alpha$. Hence it lies in~$Z$, giving
$z\le z\wedge b\le b$. This would make~$z$ the supremum of the $c_\alpha$, another contradiction.

Let $\rho$ be the least cardinality of a cofinal subset of $Y$. It is infinite because $Y$ has no greatest member. Every subset of
$Y$ of cardinality less than $\rho$ has a strict upper bound in $Y$. Starting from a cofinal subset of size $\rho$, transfinite recursion therefore gives an increasing cofinal sequence of length $\rho$, with first member $0$. If $\rho$ were singular, restricting that sequence to a cofinal subset of the ordinal $\rho$ of cardinality $\operatorname{cf}(\rho)<\rho$ would contradict minimality. Thus $\rho$ is regular. The order-dual argument gives an infinite regular coinitiality $\sigma$ for $Z$, and a decreasing coinitial sequence starting with $1$. Denote the resulting sequences by $(f_\alpha)_{\alpha<\rho}$ and $(h_\beta)_{\beta<\sigma}$. Their cross inequalities follow from $Y<Z$. An interpolant~$d$ would lie above~$Y$ and below~$Z$, and would therefore belong to~$M$ by maximality. Membership in~$Y$ would make it the greatest member of~$Y$; membership in~$Z$ would make it the least member of~$Z$. Both alternatives are impossible.
\end{proof}

In the zero-dimensional case, a Boolean gap yields two increasing
families of clopens whose unions are disjoint but have intersecting
closures, since a clopen separator would provide an interpolant.

\begin{proposition}[The zero-dimensional case]
\label{tcp:prop:zero-dimensional}
Every compact Hausdorff zero-dimensional space~$K$ which is not extremally disconnected admits an inseparable pair indexed by
infinite regular cardinals. If~$K$ is also an $F$-space, at least one of the two cardinals is uncountable.
\end{proposition}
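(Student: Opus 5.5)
The plan is to pass through the clopen algebra. Put $\mathfrak B=\operatorname{Clop}(K)$. The first step is to show $\mathfrak B$ is not complete. Since $K$ is not extremally disconnected, choose an open $U$ with $\overline U$ not open. Write $U$ as the union of the clopens it contains, which is possible by zero-dimensionality, and suppose this family has a least clopen upper bound $W$. Then $\overline U\subseteq W$. If some $t\in W\setminus\overline U$ existed, a clopen neighbourhood $V\subseteq W\setminus\overline U$ of $t$ would make $W\setminus V$ a smaller clopen upper bound, which is impossible. Hence $W=\overline U$ is open, a contradiction.

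Lemma~\ref{tcp:lem:boolean-gap} then supplies infinite regular cardinals $\rho,\sigma$ and chains $(f_\alpha)_{\alpha<\rho}$, $(h_\beta)_{\beta<\sigma}$ of clopens with no interpolant. I set
\[
u_\alpha=\mathbf 1_{f_\alpha},\qquad v_\beta=\mathbf 1_{K\setminus h_\beta}.
\]
These are continuous because the sets are clopen, and they are increasing with $u_0=v_0=0$. Since $f_\alpha\subseteq h_\beta$, we get $u_\alpha v_\beta=0$. Also $F_\alpha=f_\alpha$ and $G_\beta=K\setminus h_\beta$.

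The key step is the terminal intersection \eqref{tcp:eq:terminal-intersection}. Put $O_1=\bigcup_\alpha f_\alpha$ and $O_2=\bigcup_\beta(K\setminus h_\beta)$. These are disjoint open sets. Suppose $\overline{O_1}\cap\overline{O_2}=\varnothing$. The sets $\overline{O_1}$ and $\overline{O_2}$ are disjoint closed sets in a compact zero-dimensional space, so some clopen $d$ contains $\overline{O_1}$ and misses $\overline{O_2}$. Here I use the standard compactness argument: separate each point of $\overline{O_1}$ from $\overline{O_2}$ by a clopen, take a finite subcover, and union. Then $f_\alpha\subseteq d$ for every $\alpha$, and $d\cap(K\setminus h_\beta)=\varnothing$, so $d\subseteq h_\beta$ for every $\beta$. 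Thus $d$ interpolates, contradicting the lemma. This completes the first assertion.

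For the $F$-space clause, suppose $\rho=\sigma=\omega$. Then $O_1$ and $O_2$ are disjoint countable unions of clopen sets, and clopen sets are cozero sets. By Lemma~\ref{tcp:lem:countable-cozero} they have disjoint closures, contradicting what was just proved. Hence at least one of $\rho,\sigma$ is uncountable.

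I expect the main (mild) obstacle to be the first step, that failure of extremal disconnectedness forces incompleteness of $\operatorname{Clop}(K)$. That is where zero-dimensionality enters essentially, namely to shrink a putative supremum off $\overline U$. The rest is bookkeeping with Lemma~\ref{tcp:lem:boolean-gap} and Lemma~\ref{tcp:lem:countable-cozero}.
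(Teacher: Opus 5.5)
Your proposal is correct and follows the paper's proof essentially step for step: you show $\operatorname{Clop}(K)$ is incomplete by shrinking a putative supremum off $\overline U$, apply the gap lemma with $F_\alpha=f_\alpha$ and $G_\beta=K\setminus h_\beta$, use a clopen separator of the two closures as the forbidden interpolant, and invoke Lemma~\ref{tcp:lem:countable-cozero} to exclude $\rho=\sigma=\omega$ in the $F$-space case. The only cosmetic difference is in the first step: you argue by contradiction from a specific open set with non-open closure, whereas the paper assumes completeness and deduces extremal disconnectedness directly.
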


\begin{proof}
The Boolean algebra $\mathfrak B=\operatorname{Clop}(K)$ is not complete. Indeed, if it were complete and $O\subset K$ were open,
let $H$ be the supremum in~$\mathfrak B$ of all clopens contained in~$O$. These clopens cover~$O$, so $\overline O\subseteq H$.
If $t\in H\setminus\overline O$, zero-dimensionality gives a nonempty clopen $C\subseteq H\setminus\overline O$. Then
$H\setminus C$ is still an upper bound for all clopens contained in~$O$, contradicting the definition of~$H$. Thus $H=\overline O$, which would make~$K$ extremally disconnected.

Apply Lemma~\ref{tcp:lem:boolean-gap}. Regard its members as clopens and write $F_\alpha=f_\alpha$ and $G_\beta=K\setminus h_\beta$. These families are increasing and cross-disjoint. Suppose the closures of their unions were disjoint. For two disjoint closed subsets of a compact zero-dimensional space there is a clopen separator: cover the first set by clopen neighborhoods avoiding the second and take a finite subcover. Such a separator~$C$ would satisfy $F_\alpha\subseteq C\subseteq h_\beta$ for all~$\alpha,\beta$, contrary to the lemma. Therefore \eqref{tcp:eq:terminal-intersection} holds, and we may take $u_\alpha=\mathbf1_{F_\alpha}$, $v_\beta=\mathbf1_{G_\beta}$.

If $\rho=\sigma=\omega$ and~$K$ is an $F$-space, the two unions would be disjoint countable unions of cozero sets. Their closures would be disjoint by Lemma~\ref{tcp:lem:countable-cozero}, a contradiction.
\end{proof}

\subsection{Transfinite sign extension in the remaining case}

When clopen sets do not separate points, we instead extend prescribed
positive and negative values until the closures of the corresponding
regions meet, with the $F$-space property forcing this first
obstruction to have uncountable cofinality.

\begin{proposition}[The case of a non-zero-dimensional $F$-space]
\label{tcp:prop:non-zero-dimensional}
Every compact Hausdorff $F$-space~$K$ which is not zero-dimensional
admits an inseparable pair with $\rho=\sigma$ an uncountable regular
cardinal.
\end{proposition}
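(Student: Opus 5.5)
The plan is a transfinite recursion that enlarges two orthogonal families of $[0,1]$-valued functions for as long as the closures of their supports stay disjoint. First, a connected component of $K$ forces the recursion to make strict progress at every stage. Second, a cardinality bound forces it to stop at some limit stage $\lambda$. Third, the $F$-space property via Lemma~\ref{tcp:lem:countable-cozero} forces $\operatorname{cf}(\lambda)>\omega$. The inseparable pair is then the families restricted to a cofinal sequence of length $\kappa=\operatorname{cf}(\lambda)$.

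\emph{Setup.} In a compact Hausdorff space quasi-components coincide with components. If every component were a singleton, $K$ would be zero-dimensional, so some component $C$ contains two points $x\neq y$. Fix disjoint closed neighbourhoods of $x$ and $y$, and Urysohn functions equal to $1$ on them, to serve as $u_1$ and $v_1$; put $u_0=v_0=0$.

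\emph{Recursion.} Suppose increasing families $(u_\beta)_{\beta<\mu}$ and $(v_\beta)_{\beta<\mu}$ in $C(K,[0,1])$ are built with $u_\beta v_\gamma=0$. Put
\[
U_\mu=\bigcup_{\beta<\mu}\operatorname{coz}(u_\beta),\qquad W_\mu=\bigcup_{\beta<\mu}\operatorname{coz}(v_\beta).
\]
If $\overline{U_\mu}\cap\overline{W_\mu}\ne\varnothing$, stop. Otherwise Urysohn's lemma gives $g\in C(K,[-1,1])$ with $g=1$ on $\overline{U_\mu}$ and $g=-1$ on $\overline{W_\mu}$, and we set
\[
u_\mu=\min\{1,(2g)^+\},\qquad v_\mu=\min\{1,(2g)^-\}.
\]
Three checks are needed.
\begin{enumerate}
\item Monotonicity: $u_\mu=1\ge u_\beta$ on $\overline{U_\mu}$, and $u_\beta=0$ off $U_\mu$; the same holds for $v$.
\item Orthogonality: $\operatorname{coz}(u_\mu)\subseteq\{g>0\}$ and $\operatorname{coz}(v_\mu)\subseteq\{g<0\}$, and the earlier supports lie in these sets.
\item Level sets: $F_\mu=\{u_\mu=1\}\supseteq\{g\ge1/2\}$, which is a neighbourhood of $\overline{U_\mu}$; similarly $G_\mu$ is a neighbourhood of $\overline{W_\mu}$.
\end{enumerate}

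\emph{Strict progress and termination.} Now $\overline{U_\mu}\cap C$ contains $x$ and misses $y$, and $C$ is connected, so this set has a relative boundary point $q\in C$. Every neighbourhood of $q$ meets $C\setminus\overline{U_\mu}$. Since $q\in\operatorname{int}F_\mu\subseteq\operatorname{coz}(u_{\mu+1})$, we get $U_{\mu+1}\supsetneq\overline{U_\mu}\supseteq U_\mu$. Hence the sets $U_\mu$ are strictly increasing, so the recursion cannot run through all ordinals below $(2^{|K|})^+$. It stops at some $\lambda$, and $\lambda$ is a limit ordinal because at a successor stage the previous step already produced the separating function.

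\emph{Uncountable cofinality.} If $\operatorname{cf}(\lambda)=\omega$, then $U_\lambda$ and $W_\lambda$ are countable unions of cozero sets along a cofinal sequence. They are disjoint by orthogonality, so Lemma~\ref{tcp:lem:countable-cozero} makes their closures disjoint, and the recursion would not stop at $\lambda$. Thus $\kappa=\operatorname{cf}(\lambda)$ is uncountable and regular.

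\emph{Extracting the pair.} Choose a strictly increasing cofinal sequence $(\lambda_\xi)_{\xi<\kappa}$ in $\lambda$, and set $u'_0=v'_0=0$, $u'_\xi=u_{\lambda_\xi+1}$ and $v'_\xi=v_{\lambda_\xi+1}$ for $\xi\ge1$. These families are increasing and orthogonal, which gives \eqref{tcp:eq:orthogonal-families}. Moreover $\operatorname{coz}(u_\beta)\subseteq F_{\beta+1}$ and similarly for $v$, so
\[
\overline{\textstyle\bigcup_\xi F'_\xi}\supseteq\overline{U_\lambda},\qquad \overline{\textstyle\bigcup_\xi G'_\xi}\supseteq\overline{W_\lambda}.
\]
The stopping condition at $\lambda$ then gives \eqref{tcp:eq:terminal-intersection} with $\rho=\sigma=\kappa$.

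The main obstacle is the strict-progress step, which is where non-zero-dimensionality enters. One must make sure $\overline{U_\mu}$ can never become relatively clopen in a way that would let the construction stall without a meeting. Running everything inside the connected component $C$ is how I would handle this. Keeping functional monotonicity, and not merely monotonicity of the level sets, through the choice $u_\mu=1$ on $\overline{U_\mu}$ is the second point to verify carefully.
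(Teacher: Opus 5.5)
Your construction is essentially the paper's. It is a transfinite Urysohn sign extension, stopped when the closures of the positive and negative regions meet, with termination by a cardinality bound, countable-cofinality stops excluded by Lemma~\ref{tcp:lem:countable-cozero}, and extraction along a cofinal sequence. Your termination mechanism, strict growth of $U_\mu$, replaces the paper's observation that each $h_\alpha$ takes a value strictly between $0$ and $\pm1$. Both rest on the same fact. You also do not need the connected component: $\overline{U_\mu}$ is closed, contains $x$ and misses $y$. So it cannot be open whenever $x,y$ are not separated by a clopen set, and that is all your boundary-point argument uses.

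One step is misjustified. A stop at a successor stage $\lambda=\mu+1$ is not ruled out by ``the previous step already produced the separating function.'' By monotonicity, $U_{\mu+1}=\operatorname{coz}(u_\mu)=\{g>0\}$ and $W_{\mu+1}=\{g<0\}$, where $g$ is the function chosen at stage $\mu$. That $g$ separates $\overline{U_\mu}$ from $\overline{W_\mu}$. It does not separate $\overline{\{g>0\}}$ from $\overline{\{g<0\}}$, and in a general compact space these can meet inside $\{g=0\}$. What excludes the successor case is the $F$-space property itself: these are disjoint cozero sets, hence have disjoint closures. This is how the paper handles it, and it is the one-set instance of your countable-cofinality argument. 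The same applies at the first stage $\mu=2$. There you should also ensure $u_1v_1=0$, which ``Urysohn functions equal to $1$ on disjoint neighbourhoods'' does not guarantee. For example, take $u_1,v_1$ to be the truncations $\min\{1,(2g)^{+}\}$ and $\min\{1,(2g)^{-}\}$ of a Urysohn function $g$ with $g(x)=1$ and $g(y)=-1$. With these corrections the proof is complete.
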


\begin{proof}
There exist distinct points $p,q\in K$ that cannot be separated by a clopen. Otherwise, given $p\in O$ with~$O$ open, for every
$q\in K\setminus O$ one could choose a clopen containing~$p$ and excluding~$q$. Compactness of $K\setminus O$ would give a finite
intersection of these clopens containing~$p$ and contained in~$O$, which would prove zero-dimensionality.

Set $h_0=0$ and use Urysohn's lemma to choose $h_1\in C(K,[-1,1])$ with $h_1(p)=1$ and $h_1(q)=-1$.
Fix a well-order of $C(K,[-1,1])$ and always choose the first admissible extension in that order. This specifies the following transfinite recursion. Suppose $(h_\alpha)_{\alpha<\gamma}$ has been defined, where $\gamma\ge2$,
and put
\[
 U_\gamma=\bigcup_{\alpha<\gamma}\{h_\alpha>0\},\qquad
 V_\gamma=\bigcup_{\alpha<\gamma}\{h_\alpha<0\}.
\]
If $\overline{U_\gamma}\cap\overline{V_\gamma}\ne\varnothing$, stop. Otherwise choose, by Urysohn's lemma,
$h_\gamma\in C(K,[-1,1])$ such that
\begin{equation}
 h_\gamma=1\text{ on }\overline{U_\gamma},\qquad
 h_\gamma=-1\text{ on }\overline{V_\gamma}.
 \label{tcp:eq:sign-extension}
\end{equation}
Inductively, the positive parts~$h_\alpha^+$ and negative parts~$h_\alpha^-:=(-h_\alpha)^+$ are increasing. At a point where
an earlier function is positive, every later function is~$1$; at a point where an earlier function is negative, every later
function is~$-1$. In particular, $U_\gamma,V_\gamma$ are disjoint at every stage, and $h_\alpha(p)=1$, $h_\alpha(q)=-1$ for $\alpha\ge1$.

For each $\alpha\ge1$, some $t\in K$ satisfies $0<|h_\alpha(t)|<1$. Otherwise the image of~$h_\alpha$ would be
contained in $\{-1,0,1\}$, and $\{h_\alpha=1\}$ would be a clopen separating~$p$ and~$q$. Equation~\eqref{tcp:eq:sign-extension} makes every later function take the value $\operatorname{sgn}(h_\alpha(t))$ at that point, so all the constructed functions with positive indices are distinct. They are also different from~$h_0$. The construction therefore cannot continue through all ordinals below $|C(K)|^+$: otherwise it would give an injection of that successor cardinal into $C(K)$. There is consequently a first stopping stage $\delta<|C(K)|^+$.

The stage~$\delta$ cannot be a successor, say $\delta=\alpha+1$: by nestedness, $U_\delta=\{h_\alpha>0\}$ and $V_\delta=\{h_\alpha<0\}$, which are disjoint cozero sets and hence have disjoint closures. Thus~$\delta$ is a limit. If $\operatorname{cf}(\delta)=\omega$, choose a countable cofinal sequence $(\alpha_n)$ in~$\delta$. Then $U_\delta=\bigcup_n\{h_{\alpha_n}>0\}$ and $V_\delta=\bigcup_n\{h_{\alpha_n}<0\}$ would again be disjoint cozero sets by Lemma~\ref{tcp:lem:countable-cozero}. This contradicts the stopping condition. Hence $\rho=\operatorname{cf}(\delta)>\omega$.

Every $\alpha<\delta$ satisfies $\alpha+1<\delta$. For $\alpha\ge1$, equation~\eqref{tcp:eq:sign-extension} gives the following inclusions; for $\alpha=0$ their left-hand sides are empty:
\[
 \{h_\alpha>0\}\subseteq\{h_{\alpha+1}=1\},\qquad
 \{h_\alpha<0\}\subseteq\{h_{\alpha+1}=-1\}.
\]
Consequently
\[
 U_\delta=\bigcup_{\alpha<\delta}\{h_\alpha=1\},\qquad
 V_\delta=\bigcup_{\alpha<\delta}\{h_\alpha=-1\}.
\]
Choose an increasing cofinal map $\xi\mapsto\gamma_\xi$ from~$\rho$ into~$\delta$ with $\gamma_0=0$, and set
$u_\xi=h_{\gamma_\xi}^+$, $v_\xi=h_{\gamma_\xi}^-$. The monotonicity of the parts implies cross-orthogonality, and
cofinality preserves the two terminal unions of level sets. Their closures intersect at the stopping stage. We have therefore
obtained Definition~\ref{tcp:def:inseparable-pair}, with $\sigma=\rho$, which is regular as a cofinality.
\end{proof}

\section{A single ordinal propagator and the BFPP characterization}
\label{sec:conclusion}
\label{tcp:subsec:main-theorem}

\subsection{Encoding the two channels}

The product notation used in the realization theorem is convenient for the estimates. We now encode both channels in a single ordinal and check the precise envelope rules.

\begin{proposition}[Ordinal encoding]
\label{tcp:prop:encoding}
Under the hypotheses of Theorem~\ref{tcp:thm:two-profile-realization}, put $\kappa=\max\{\rho,\sigma\}$. There are a nonempty closed bounded convex set $\mathcal D\subset\ell_\infty(\kappa)$, nonexpansive maps
\[
 \mathcal A:B_{C(K)}\to\mathcal D,
 \qquad \mathcal J:\mathcal D\to B_{C(K)},
\]
and a TCP $\mathscr P:\mathcal D\to\mathcal D$ such that $\operatorname{Fix}(\mathcal A\mathcal J\mathscr P)=\varnothing$.
\end{proposition}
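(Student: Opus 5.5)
The plan is to transport the product system of Theorem~\ref{tcp:thm:two-profile-realization} along a linear isometric embedding $E:\ell_\infty(\rho)\times\ell_\infty(\sigma)\to\ell_\infty(\kappa)$. The two channels are interleaved, with the second channel stored with a negative sign. We then set $\mathcal D=E(\mathcal D_0)$, $\mathcal A=E\mathcal A_0$, $\mathcal J=\mathcal J_0E^{-1}$ and $\mathscr P=EP_0E^{-1}$. Since $E$ is an isometry onto its image, the metric properties are automatic: $\mathcal D$ is nonempty, closed, bounded and convex, and the maps are nonexpansive. Moreover $\operatorname{Fix}(\mathcal A\mathcal J\mathscr P)=E\bigl(\operatorname{Fix}(\mathcal A_0\mathcal J_0P_0)\bigr)=\varnothing$. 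The real work is checking that $\mathscr P$ is a TCP in the sense of Definition~\ref{tcp:def:propagator}.

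\emph{Padding.} First lengthen both channels to $\kappa$. If $\sigma<\kappa$, extend $b\in\mathcal M_\sigma$ by $b_\xi=\ell_\sigma(b)$ for $\sigma\le\xi<\kappa$, and do the same for $a$ if $\rho<\kappa$. This map is an affine isometry onto increasing $[0,2]$-valued profiles of length $\kappa$ that are constant from $\sigma$ on, and it preserves the tail functional. It also commutes with $Q$: the extended $Q_\kappa$ gives $\sup_{\xi<\sigma}b_\xi$ at $\sigma$ and copies constants afterwards, and this agrees with the padding of $Q_\sigma b$ because $Q_\sigma$ preserves $\ell_\sigma$ by Lemma~\ref{tcp:lem:profile-operators}.

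\emph{Interleaving.} Every $\xi<\kappa$ is uniquely $\lambda+n$, with $\lambda$ zero or a limit and $n<\omega$. Since $\kappa$ is a limit ordinal, the map $\xi\mapsto(\lambda+2n,\lambda+2n+1)$ partitions $\kappa$ into pairs. Define
\[
 (Ez)_{\lambda+2n}=a_{\lambda+n},\qquad (Ez)_{\lambda+2n+1}=-b_{\lambda+n},
\]
for the padded profiles $a,b$. Then $\mathscr P$ acts coordinatewise as follows.
\begin{itemize}
\item Coordinate $0$ is the constant $0$, and coordinate $1$ is the constant $-(Q b)_0=0$; the latter is a constant successor rule.
\item For $\xi=\lambda+n$, the successor coordinates $\lambda+2n+2$ and $\lambda+2n+3$ copy the earlier coordinates $\lambda+2n$ and $\lambda+2n+1$. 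These are coordinate evaluations of the prefix, hence nonexpansive.
\item At a nonzero limit $\lambda$, coordinate $\lambda$ must equal $(Q a)_\lambda=\sup_{\xi<\lambda}a_\xi$, and coordinate $\lambda+1$ must equal $-(Q b)_\lambda=-\sup_{\xi<\lambda}b_\xi$.
\end{itemize}

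\emph{The envelope identities (main check).} The step that needs care is that the limit-stage values are genuinely the envelopes of the mixed prefix $x|_{[0,\lambda)}$. On the prefix the even entries are $a$-values in $[0,2]$, nondecreasing along the index, and the odd entries are $-b$-values in $[-2,0]$, nonincreasing. Every tail of the prefix contains even and odd entries cofinally. Hence
\[
 \overline{\mathcal E}_\lambda=\max\Bigl\{\sup_{\xi<\lambda}a_\xi,\ \limsup(-b)\Bigr\}=\sup_{\xi<\lambda}a_\xi,
\]
because $-b\le0\le a$. Symmetrically, $\underline{\mathcal E}_\lambda=-\sup_{\xi<\lambda}b_\xi$. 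So $(\mathscr Px)_\lambda=\overline{\mathcal E}_\lambda(x|_{[0,\lambda)})$ with the fixed choice $\varepsilon_\lambda=+$. Coordinate $\lambda+1$ is a successor rule storing the other envelope $\underline{\mathcal E}_\lambda(x|_{[0,\lambda)})$, which Definition~\ref{tcp:def:propagator} explicitly allows, and this envelope is nonexpansive by Lemma~\ref{tcp:lem:envelope}. These rules are independent of the input.

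\emph{Invariance.} Finally, $\mathscr P(\mathcal D)\subseteq\mathcal D$ holds for two reasons. First, $P_0$ preserves $\mathcal D_0$ by Lemma~\ref{tcp:lem:profile-operators}. Second, the padding commutes with $Q$, so $\mathscr P=EP_0E^{-1}$ on $\mathcal D$. The successor rules only need to be defined on prefixes of members of $\mathcal D$, as Remark~\ref{tcp:rem:propagator-metric} observes.
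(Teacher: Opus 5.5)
Your proposal is correct and is essentially the paper's proof. It uses the same padding by the tail value, with the same commutation $Q_\kappa\widetilde b=\widetilde{Q_\sigma b}$, the same interleaving at $\lambda+2n=2\cdot(\lambda+n)$ and $\lambda+2n+1$, the same transported maps $E\mathcal A_0$, $\mathcal J_0E^{-1}$, $EP_0E^{-1}$, and the same limit-stage check with the complementary envelope stored at $\lambda+1$.

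The only difference is cosmetic. You separate the channels by negating the second one, so $\varepsilon_\lambda=+$ recovers $a$ and the stored $\underline{\mathcal E}_\lambda$ recovers $-b$; the paper instead shifts the second channel into $[4,6]$ and uses $\underline{\mathcal E}_\lambda$ at $\lambda$. One small correction: $E$ is only affine on increasing profiles, not linear on all of $\ell_\infty(\rho)\times\ell_\infty(\sigma)$, and affinity on increasing profiles is exactly what your convexity claim uses.
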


\begin{proof}
For $a\in\mathcal M_\rho$, define its extension to~$\kappa$ by
\[
 \widetilde a_\alpha=
 \begin{cases}
  a_\alpha,&\alpha<\rho,\\
  \ell_\rho(a),&\rho\le\alpha<\kappa,
 \end{cases}
\]
and extend $b\in\mathcal M_\sigma$ in the same way. Extension is isometric on each profile space: its additional
coordinate differences are bounded by the original sup distance, while the original coordinates remain present. It is also affine
on increasing profiles because their tail functionals are affine. We shall use ordinal multiplication on the left by~$2$.
Every ordinal has a unique expression $\lambda+n$, where $\lambda=0$ or $\lambda$ is a nonzero limit, and $n<\omega$.
At a nonzero limit, writing $\lambda=\omega\cdot\xi$ gives $2\cdot\lambda=(2\cdot\omega)\cdot\xi=\lambda$.
In particular $2\cdot\kappa=\kappa$, and the pairs $\{2\cdot\alpha,2\cdot\alpha+1\}$, $\alpha<\kappa$, partition
the ordinals below $\kappa$. Define an affine isometry
\[
 E:\mathcal D_0\longrightarrow\ell_\infty(\kappa),
\]
by putting
\begin{equation}
 (E(a,b))_{2\cdot\alpha}=\widetilde a_\alpha,\quad
 (E(a,b))_{2\cdot\alpha+1}=4+\widetilde b_\alpha.
 \label{tcp:eq:encoding}
\end{equation}
Set
\begin{equation}
 \mathcal D=E(\mathcal D_0),\qquad
 \mathcal A=E\mathcal A_0, 
 \label{tcp:eq:encoded-maps}
\end{equation}
and
\begin{equation}
 \mathcal J=\mathcal J_0E^{-1},\qquad
 \mathscr P=EP_0E^{-1}.\label{tcp:eq:encoded-synthesis-propagator}
\end{equation}
Isometry and affinity give convexity and boundedness of~$\mathcal D$. It is closed because $\mathcal D_0$ is complete and~$E$ is an isometry. All three maps are nonexpansive, $\mathscr P(\mathcal D)\subseteq\mathcal D$, and
\[
 \mathcal A\mathcal J\mathscr P=E(\mathcal A_0\mathcal J_0P_0)E^{-1}
\]
has no fixed point.

It remains to verify that~$\mathscr P$ has the required ordinal rules. The delay commutes with extension:
\begin{equation}
 Q_\kappa\widetilde a=\widetilde{Q_\rho a},\qquad
 Q_\kappa\widetilde b=\widetilde{Q_\sigma b}.
 \label{tcp:eq:extension-delay}
\end{equation}
For example, if $\rho<\kappa$, the coordinate at~$\rho$ on the left is the supremum of the original channel, because~$\rho$ is
a limit ordinal. Every later coordinate has the same value, and $Q_\rho$ preserves that supremum. This proves the first identity;
the second is identical. For $x\in\mathcal D$, the initial values are $(\mathscr Px)_0=0$ and $(\mathscr Px)_1=4$. If $\alpha=\beta+1$, then
\begin{equation}
\begin{split}
 &(\mathscr Px)_{2\cdot\alpha}=x_{2\cdot\beta},\\[1.5mm]
 &(\mathscr Px)_{2\cdot\alpha+1}=x_{2\cdot\beta+1}.
\end{split}
 \label{tcp:eq:encoded-successors}
\end{equation}
For every nonzero limit~$\lambda<\kappa$, one has
$2\cdot\lambda=\lambda$, and
\begin{equation}
 \begin{aligned}
 &(\mathscr Px)_\lambda =\underline{\mathcal E}_\lambda(x|_{[0,\lambda)}),\\[1.5mm]
 &(\mathscr Px)_{\lambda+1}=\overline{\mathcal E}_\lambda(x|_{[0,\lambda)}).
 \end{aligned}
 \label{tcp:eq:encoded-limits}
\end{equation}
Indeed, before~$\lambda$ the two coordinate classes are cofinal. The first channel is increasing and takes values in~$[0,2]$;
the second is increasing and takes values in~$[4,6]$. The separation between these intervals allows the lower envelope to recover the first tail and the upper envelope to recover the second. It follows that
\[
 \liminf_{\xi\uparrow\lambda}x_\xi=\sup_{\alpha<\lambda}\widetilde a_\alpha,
\]
and
\[
 \limsup_{\xi\uparrow\lambda}x_\xi=4+\sup_{\alpha<\lambda}\widetilde b_\alpha.
\]
These are exactly the two limit values of the delayed channels in~\eqref{tcp:eq:extension-delay}. The rule at~$\lambda+1$ is a
nonexpansive successor rule depending only on the preceding input prefix. Thus~$\mathscr P$ satisfies Definition~\ref{tcp:def:propagator}.
\end{proof}

\subsection{The realization theorem and its consequences}

\begin{theorem}[Transfinite realization on a non-extremally disconnected
compact $F$-space]
\label{tcp:thm:main}
Let $K$ be a compact Hausdorff $F$-space which is not extremally disconnected. There exist an uncountable regular cardinal~$\kappa$, regarded as its initial ordinal, a nonempty closed bounded convex set $\mathcal D\subset\ell_\infty(\kappa)$, and nonexpansive maps
\[
 \mathcal A:B_{C(K)}\longrightarrow\mathcal D,
 \qquad
 \mathcal J:\mathcal D\longrightarrow B_{C(K)},
 \qquad
 \mathscr P:\mathcal D\longrightarrow\mathcal D
\]
such that~$\mathscr P$ is a TCP and
\[
 \operatorname{Fix}(\mathcal A\mathcal J\mathscr P)=\varnothing.
\]
In particular,
\[
 T=\mathcal J\mathscr P\mathcal A:
 B_{C(K)}\longrightarrow B_{C(K)}
\]
is nonexpansive and has no fixed point.
\end{theorem}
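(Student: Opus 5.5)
The theorem follows by combining the topological propositions of Section~\ref{sec:families} with the encoding of Proposition~\ref{tcp:prop:encoding} and the transfer principle. The plan splits into two cases, according to whether $K$ is zero-dimensional.

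\emph{Step 1: produce an inseparable pair.}
If $K$ is zero-dimensional, Proposition~\ref{tcp:prop:zero-dimensional} gives an inseparable pair indexed by infinite regular cardinals $\rho,\sigma$. Since $K$ is an $F$-space, at least one of these cardinals is uncountable. If $K$ is not zero-dimensional, Proposition~\ref{tcp:prop:non-zero-dimensional} gives an inseparable pair with $\rho=\sigma$ an uncountable regular cardinal. In either case Definition~\ref{tcp:def:inseparable-pair} holds.

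\emph{Step 2: encode in a single ordinal.}
Set $\kappa=\max\{\rho,\sigma\}$. Both cardinals are regular and at least one is uncountable, so $\kappa$ is the larger of the two. Hence $\kappa$ is an uncountable regular cardinal, and it is an infinite limit ordinal, as Proposition~\ref{tcp:prop:encoding} requires (since $2\cdot\kappa=\kappa$). Theorem~\ref{tcp:thm:two-profile-realization} supplies $\mathcal D_0,\mathcal A_0,\mathcal J_0,P_0$. Proposition~\ref{tcp:prop:encoding} then produces $\mathcal D\subset\ell_\infty(\kappa)$, the nonexpansive maps $\mathcal A,\mathcal J$, and a TCP $\mathscr P$ with $\operatorname{Fix}(\mathcal A\mathcal J\mathscr P)=\varnothing$. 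The nonexpansiveness of $\mathscr P$ comes either from Remark~\ref{tcp:rem:propagator-metric} or directly from the conjugation $\mathscr P=EP_0E^{-1}$ by an isometry.

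\emph{Step 3: transfer to the ball.}
Proposition~\ref{tcp:prop:transfer} shows that $T=\mathcal J\mathscr P\mathcal A$ is a nonexpansive self-map of $B_{C(K)}$. Its fixed points are in bijection with $\operatorname{Fix}(\mathcal A\mathcal J\mathscr P)=\varnothing$, so $T$ has no fixed point.

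\emph{Main obstacle.}
The only non-bookkeeping point is ensuring that $\kappa$ is uncountable. That is exactly where the $F$-space hypothesis enters, through Lemma~\ref{tcp:lem:countable-cozero} in both propositions of Section~\ref{sec:families}. One should also check that the encoding behaves correctly when $\rho\ne\sigma$, with one of them possibly $\omega$. In that case the shorter channel is extended by its constant tail. Its envelopes at limits beyond its length then just return that tail value, so identity \eqref{tcp:eq:extension-delay} holds and the limit rules \eqref{tcp:eq:encoded-limits} remain valid.
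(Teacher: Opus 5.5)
Your proposal is correct and follows the paper's proof essentially step for step. You obtain an inseparable pair from the zero-dimensional or non-zero-dimensional proposition, note that at least one index is an uncountable regular cardinal so $\kappa=\max\{\rho,\sigma\}$ is uncountable regular, and then apply the two-profile realization theorem, the ordinal encoding, and the transfer principle, with your remarks on extending the shorter channel by its constant tail matching what the paper checks inside the encoding proposition.
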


\begin{proof}
If~$K$ is zero-dimensional, use Proposition~\ref{tcp:prop:zero-dimensional}; otherwise use Proposition~\ref{tcp:prop:non-zero-dimensional}. In both cases there is an inseparable pair indexed by infinite regular cardinals $\rho,\sigma$, at least one of which is uncountable. Their maximum~$\kappa$ is therefore uncountable and regular. Apply Theorem~\ref{tcp:thm:two-profile-realization} and Proposition~\ref{tcp:prop:encoding}, followed by Proposition~\ref{tcp:prop:transfer}.
\end{proof}

\begin{corollary}
\label{tcp:cor:zero-dimensional}
If $K$ is compact Hausdorff, zero-dimensional, and not extremally disconnected, then $B_{C(K)}$ admits a fixed-point-free nonexpansive self-map, without an additional $F$-space assumption.
\end{corollary}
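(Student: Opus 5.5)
The corollary follows by chaining results already proved in the paper; the $F$-space hypothesis was used only to make one of the indexing cardinals uncountable.

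First, apply Proposition~\ref{tcp:prop:zero-dimensional}. Its first assertion needs only that $K$ be compact Hausdorff, zero-dimensional, and not extremally disconnected. It gives an inseparable pair $(u_\alpha)_{\alpha<\rho}$, $(v_\beta)_{\beta<\sigma}$ in the sense of Definition~\ref{tcp:def:inseparable-pair}, with $\rho,\sigma$ infinite regular cardinals. Concretely, the pair comes from the Boolean gap of Lemma~\ref{tcp:lem:boolean-gap} in $\operatorname{Clop}(K)$. That algebra is incomplete by the argument in the proposition, which uses zero-dimensionality but not the $F$-property. The clopen separation argument then yields \eqref{tcp:eq:terminal-intersection}. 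The second sentence of that proposition, the one invoking the $F$-space property, is the only place uncountability entered, and we do not need it.

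Second, feed this pair into Theorem~\ref{tcp:thm:two-profile-realization}. I will check that its proof never uses the $F$-property or the uncountability of $\rho,\sigma$. The domain $\mathcal D_0$ and the delay $P_0$ are defined for any infinite limit ordinals. The analysis map only uses a point $p$ in the terminal intersection. The synthesis uses Lemma~\ref{tcp:lem:positive-synthesis}, which holds for any increasing family indexed by a limit ordinal. The order estimate and the exclusion of fixed points via Lemma~\ref{tcp:lem:profile-operators}(1) are purely ordinal. The theorem therefore directly supplies the map $\mathcal J_0P_0\mathcal A_0$, which is nonexpansive from $B_{C(K)}$ to itself and fixed-point-free.

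The only point requiring care is confirming that no step secretly relies on uncountable cofinality. One might worry about the tail limits in \eqref{tcp:eq:synthesis-profiles}, and about the mass identity in \eqref{tcp:eq:representing-mass} when $\rho=\omega$. The first is handled by compactness of the initial intervals; the second is handled by inner regularity. Both work verbatim for $\omega$. If one additionally wants the single-ordinal TCP form, Proposition~\ref{tcp:prop:encoding} applies with $\kappa=\max\{\rho,\sigma\}$, possibly countable, but this is not required for the stated conclusion.
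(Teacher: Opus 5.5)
Your proposal is correct and is the paper's own proof: the paper likewise composes the first assertion of Proposition~\ref{tcp:prop:zero-dimensional} with Theorem~\ref{tcp:thm:two-profile-realization}. Your check that the realization theorem uses neither the $F$-property nor uncountability of $\rho,\sigma$ makes explicit what that one-line argument leaves implicit.
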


\begin{proof}
Apply Proposition~\ref{tcp:prop:zero-dimensional} and Theorem~\ref{tcp:thm:two-profile-realization}.
\end{proof}

\begin{corollary}[The BFPP characterization]
\label{tcp:cor:characterization}
For a compact Hausdorff space~$K$, the real Banach space~$C(K)$ has the BFPP if and only if~$K$ is extremally disconnected.
\end{corollary}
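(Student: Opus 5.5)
The characterization follows by combining the known implications \eqref{eq:known-implications} with Theorem~\ref{tcp:thm:main}. I will prove the two directions separately.

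\emph{Sufficiency.} If $K$ is extremally disconnected, then $C(K)$ has the BFPP by the first implication in \eqref{eq:known-implications}, that is, by \cite[Corollary~2.4]{AJLCS}. Nothing new is needed here.

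\emph{Necessity.} Suppose $C(K)$ has the BFPP, and argue by contradiction, assuming $K$ is not extremally disconnected. The second implication in \eqref{eq:known-implications} (\cite[Theorem~3.2]{AJLCS}) shows that $K$ is an $F$-space. Then $K$ is a compact $F$-space that is not extremally disconnected, so Theorem~\ref{tcp:thm:main} applies. It yields $T=\mathcal J\mathscr P\mathcal A$, a nonexpansive self-map of $B_{C(K)}$ with no fixed point, which contradicts the BFPP.

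A slightly more economical route splits on zero-dimensionality. If $K$ is zero-dimensional and not extremally disconnected, Corollary~\ref{tcp:cor:zero-dimensional} already gives a fixed-point-free map without invoking the $F$-space property. Only the non-zero-dimensional case needs the $F$-space reduction, followed by Proposition~\ref{tcp:prop:non-zero-dimensional} and Theorem~\ref{tcp:thm:two-profile-realization}. Still, the single appeal to Theorem~\ref{tcp:thm:main} is cleaner, and that is the version I would write.

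There is no real obstacle at this stage, since all the substantive work sits in Theorem~\ref{tcp:thm:main} and in the cited implications. The one point to check is that those implications are stated for real $C(K)$ and the full closed unit ball, which matches the BFPP as defined here. I also need that BFPP on $B_{C(K)}$ is exactly what Theorem~\ref{tcp:thm:main} refutes; it is, because $T$ is a self-map of the closed unit ball itself.
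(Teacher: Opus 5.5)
Your proposal is correct and is essentially the paper's own proof. Sufficiency is the first implication in \eqref{eq:known-implications}. For necessity, the second implication makes $K$ an $F$-space, and Theorem~\ref{tcp:thm:main} then gives a fixed-point-free nonexpansive self-map of $B_{C(K)}$, a contradiction. Your optional split on zero-dimensionality is also valid but not needed.
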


\begin{proof}
The implication from extremal disconnectedness, and the fact that the BFPP for~$C(K)$ forces~$K$ to be an $F$-space, are \eqref{eq:known-implications}, from~\cite[Corollary~2.4 and Theorem~3.2]{AJLCS}. If~$C(K)$ has the BFPP but~$K$ is not extremally disconnected,
Theorem~\ref{tcp:thm:main} applies to that compact $F$-space and gives a contradiction.
\end{proof}

\begin{remark}[Scalar fields]
The fixed-point-free construction in Theorem~\ref{tcp:thm:main} also applies to the complex unit ball: compose the real analysis
map with $f\mapsto\operatorname{Re}f$ and retain the real-valued synthesis. These maps are nonexpansive, and any fixed point would
be real-valued, contrary to the theorem.
\end{remark}


\section*{Data Availability}
Data sharing is not applicable to this article as no datasets were generated or analyzed during the current study.

\section*{Declaration of Competing Interest}
The author declares that he has no known competing financial interests or personal relationships that could have appeared to influence the work reported in this paper.

\section*{Declaration of generative AI and AI-assisted technologies in the manuscript preparation process}

During the preparation of this manuscript, OpenAI's Codex was used as a tool for mathematical exploration, drafting, language refinement, bibliographic checking, and the critical examination of arguments. The main use for this LLM was to assist in the special arguments used in some proofs. The author assumes full responsibility for verifying the statements and references and for the final content and mathematical accuracy of the manuscript.

\paragraph{Lean formalization}
A companion Lean 4 formalization of the mathematical results of this paper, using mathlib as its only library dependency,
is available at
\begin{center}
\url{https://github.com/Cleon-SB/ball-fixed-point-property-ck-spaces}
\end{center}
The repository contains the source code, build instructions, pinned dependency versions, and a correspondence between the
mathematical statements and their formal counterparts.

\end{document}